\theoremstyle{plain}
\newtheorem{Th}{Theorem}[section]
\newtheorem{lemma}[Th]{Lemma}
\newtheorem{proposition}[Th]{Proposition}
\theoremstyle{definition}
\newtheorem{definition}[Th]{Definition}
\newtheorem{?}[Th]{Problem}
\DeclareMathOperator*{\argmin}{arg\,min}
\newcommand{\supp}{\textup{supp }}
\newcommand{\alex}[1]{\textcolor{black}{#1}}
\newcommand{\leave}[1]{}
\providecommand{\keywords}[1]
{
  \small	
  \textbf{\textit{Keywords}} 
}
\begin{document}

\title{An asymptotic analysis of separating pointlike and $C^{\beta}-$curvelike singularities}
\author{Van Tiep Do$^{* \dagger}$} 
    \author{Alex Goe{\ss}mann$^{*}$}

\keywords{ Geometric Separation \and wavelets \and Shearlets \and   $l_1$-minimization \and  Sparsity \and Cluster Coherence \and dual frames.}

%    author two information

\address{ $^{*}$ Department of Mathematics, Technische Universit{\"a}t Berlin, 10623 Berlin, Germany}

\address{ $^{\dagger}$ Vietnam National University, 334 Nguyen Trai, Thanh Xuan, Hanoi}

\email{tiepdv@math.tu-berlin.de, goessmann@tu-berlin.de}
\thanks{}

%\subjclass[2010]{42C40, 42C15, 65K10, 65J22, 65T60, 68U10, 90C25}

\dedicatory{}

\maketitle

\begin{abstract}
In this paper, we  present a theoretical analysis of separating images consisting of pointlike and $C^{ \beta}$-curvelike structures, where $\beta \in (1,2] $.  Our \leave{analyzing method}\alex{approach} is based on $l_1$-minimization, in which  the sparsity of the desired solution is exploited by two sparse representation systems. It is well known that for such components wavelets provide an optimally sparse representation for point singularities, whereas $\alpha$-shearlet type with $\alpha$=$\frac{2}{\beta}$ might be best adapted to the $C^{\beta}$-curvilinear singularities.
In our analysis, we first propose a reconstruction framework with theoretical guarantee on
convergence, which  is extended to use general frames instead of Parseval frames. We then construct a dual pair of bandlimited $\alpha$-shearlets which possess\alex{es a} good time and frequency localization.  Finally, we  apply the \leave{abstract} result to derive an asymptotic accuracy of the reconstructions.
In addition, we show that it is possible to separate these two components as long as $\alpha <2$, i.e.,  bandlimited $\alpha$-shearlets which range from wavelet to shearlet type do not coincide with wavelets in the sense of isotropic fashion.
\end{abstract}
\section{Introduction}
In the era of data analytics, the task of \emph{geometric separation} is of interest for various applications. For instance, astronomers might want to extract stars from filaments, or neurologists might want to separate neurons from dendrites. 
This task arises due to the fact that image data are often the superposition of different geometric components. 
In fact, numerous publications have been contributed to this area  both in the mathematical and engineering communities \cite{23,26,31,40,44,45}. 

To turn image separation into a uniquely solvable problem, the first steps are assumptions onto the shape of the components to be recovered.
Recently, the methodology of compressed sensing allows the efficient reconstruction of sparse or approximately sparse data from highly incomplete linear measurements by $l^1$-minimization or thresholding \cite{29,30,35}. 
The key ingredient is to choose two appropriate dictionaries, each one sparsely represent\alex{ing} the corresponding component but \leave{unable}\alex{failing} to sparsely represent the other. 
In recent decades, there have been various applications of compressed sensing techniques, including deblurring and deconvolution \cite{8,36}, image inpainting \cite{15,32,39}, data compression \cite{33,34}, as well as geometric separation \cite{2,7,12,20,28}. Along the way, the task of separating pointlike and curvelike structures was first introduced in \cite{12} with a theoretical recovery guarantee
by using wavelet and curvelet Parseval frames.
Indeed, wavelets are well adapted
\alex{for}\leave{to dealing with} pointlike phenomena, whereas curvelets provide optimal\leave{ly sparse} representation for images with edges. However, the limitation is that curvelets use rotation which ignores the discrete lattice structures. Later, shearlet systems, originally introduced in \cite{11} and then followed by other types \cite{9,19,37,39},  make use of shearing instead of rotation. They have been shown to share similar optimal approximation behavior with curvelets, but they allow \alex{the} unified treatment of the continuum and digital realm leading to faithful implementations compared with other known sparse representation systems like wavelets \cite{1}, ridgelets \cite{4}, curvelets \cite{5}, bandlets \cite{14} \alex{and} contourlets \cite{27}.

In addition, \leave{there appeared }a class of shearlets \alex{appeared} \leave{which used }\alex{using }flexible scaling to adapt the system according to the smoothness of the data. Among them,  universal shearlets \cite{39} form  Parseval frames by \leave{using }changing scaling parameters at each scale which are best adapted to only high frequency part, whereas compactly supported $\alpha$-shearlets \cite{42,43} with fixed scaling parameter provide superior localization but fail to form  Parseval frames. Thus, although Parseval frames play a crucial role both in applications and theoretical studies, this property is not always \leave{archived}\alex{achieved}. Overall, each of them has its own advantages and disadvantages 
depending on the approach chosen in applications.

In this paper, we consider \leave{a}\alex{the} problem of separating pointlike singularities and $C^{\beta}$-curvilinear singularities, \alex{where }$\beta \in (1,2]$. For our analysis, we construct a dual pair of bandlimited $\alpha$-shearlets
%in which wavelets and bandlimited $\alpha$-shearlets using $\alpha$-scaling instead of parabolic scaling will come into play.
 with flexible scaling which adaptively match their decompositions to the smoothness of observed data, i.e., they provide optimal sparse representation of $C^{\beta}$-curvilinear singularities, $\beta=2/\alpha$. 
They range from wavelets ($\alpha=1$) to shearlets ($\alpha=2$). This problem is more \leave{general}\alex{involved} than \alex{when restricting to}\leave{which used} Parseval frame pair\alex{s} of wavelets-curvelets \cite{12}, or wavelets-shearlets \cite{2,20}. In our analysis, we provide a theoretical guarantee for geometric separation using general frames \leave{instead of}\alex{not restricting to} Parseval frames. 
\alex{Here we also answer the question, if separation is still possible in case of wavelets, i.e., $\alpha \rightarrow 2$.}

\subsection{Our contributions}
Our contributions in this paper \leave{reside}\alex{consist} in three main points. First, we present a theoretical guarantee for \alex{the} problem of separating two geometric components using  two general frames (Theorem \ref{1206-5}). Second, in Subsection \ref{2506-1} we construct a pair of  bandlimited $\alpha$-shearlet dual frames which provide an optimal sparse representation for $C^{\beta}$-curvilinear singularities. 
Finally, we derive an asymptotic geometric separation result for separating pointlike and $C^{\beta}$-curvelike singularities (Theorem \ref{1006-18}). Also, we show that the proposed algorithm successfully reconstructs
sub-images by $l^1$-minimization if the bandlimited $\alpha$-shearlets do not coincide with wavelets in sense of isotropic fashion. 
  
\section{Formulation of the problem} \label{1406-2}
\subsection{Notation and basic definitions}

We first start with  some basic notions and definitions.

For $f, F \in L^1(\mathbb{R}^2)$ we define the \emph{Fourier transform} and \emph{inverse Fourier transform}  by 
$$ \hat{f}(\xi)= \int_{\mathbb{R}^2} f(x) e^{-2\pi i  x^T \xi} dx,$$
$$ \check{F}(x) = \int_{\mathbb{R}^2} F(\xi) e^{2\pi i \xi^T x} d\xi,$$
with the usual extension
to  $L^2(\mathbb{R}^2)$. 

A \emph{frame} for a separable Hilbert space $\mathcal{H}$ is a countable family $\Phi = \{ \phi_i \}_{i \in I}$ in $\mathcal{H}$ for which there exist constants $0 < A \leq B < +\infty$  such that 
\begin{equation} \label{Eq1}
     A \| f \|_2^2 \leq \sum_{i \in I} | \langle f, \phi_i \rangle |^2 \leq B \|f \|_2^2, \quad  \forall f \in \mathcal{H}. 
\end{equation}
The constants $A$ and $B$ are called the \emph{lower} and \emph{upper frame bounds}, respectively. If A = B, this system is called an A-tight frame. In addition, if $A = B = 1$, then it is called a \emph{Parseval frame}.

Slightly abusing notion, we use $\Phi$ again to denote the \emph{synthesis operator}
$$ \Phi : l_2(I) \rightarrow \mathcal{H}, \quad \Phi(\{ c_i \}_{i \in I} ) = \sum_{i \in I }c_i \phi_i.$$
We denote by $\Phi^*$  the \emph{analysis operator}
$$ \Phi^* : \mathcal{H}  \rightarrow l_2(I), \quad \Phi^*(f ) = (\langle f, \phi_i \rangle)_{i \in I} .$$
The \emph{frame operator} $\mathbb{S}$ associated with a frame $\Phi = \{ \phi_i \}_{i \in I}$ is defined by
$$\mathbb{S}=\Phi \Phi^*: \mathcal{H} \rightarrow \mathcal{H}, \quad \mathbb{S} f= \sum_{i \in I} \langle f, \phi_i \rangle \phi_i. $$

Given a frame $\Phi = \{ \phi_i 
\}_{i \in I}$  for $\mathcal{H}$, there exists a sequence $\Phi^d = \{ \phi^d_i \}_{i \in I}$ in $\mathcal{H}$ such that 
\begin{equation} \label{1306-2}
   f=\Phi (\Phi^d)^* f= \sum_{i \in I}\langle f, \phi^d_i \rangle \phi_i, \; \forall f \in \mathcal{H},
\end{equation}
and 
\begin{equation} \label{1306-3}
    f=(\Phi^d) \Phi^* f= \sum_{i \in I}\langle f, \phi_i \rangle \phi_i^d,  \; \forall f \in \mathcal{H}. 
\end{equation}
%\leave{In that case, we call $\Phi^d$ an \emph{alternative dual
%frame} (or simply a \emph{dual frame}) of $\Phi$.
%If the sequence $\Phi^d$ does not necessarily form a frame for $\mathcal{H}$, but the condition $\eqref{1306-2}$ holds, $\Phi^d$ is called
%an \emph{analysis pseudo-dual} of the frame $\Phi$. On the other hand, it is called an \emph{synthesis pseudo-dual} in case \eqref{1306-3} holds.}
If both equations hold, $\Phi^d$ is called an \emph{alternative dual
frame} (or simply a \emph{dual frame}) of $\Phi$.
Respectively, we call $\Phi^d$
an \emph{analysis pseudo-dual} of the frame $\Phi$, if only \eqref{1306-2} holds and a \emph{synthesis pseudo-dual} in case only \eqref{1306-3} holds. 
% Note that any Bessel sequence satisfies either \eqref{Eq10} or \eqref{Eq11} forms a frame for $\mathcal{H}$.
In addition, \textcolor{black}{the inequality \eqref{Eq1} implies} that $\mathbb{S}$ is a
self-adjoint, invertible operator on $\mathcal{H}$ \cite{3}.
This leads to a special dual frame 
$\{ \mathbb{S}^{-1} \phi_i \}_{i \in I}$ 
called the \emph{canonical
dual frame} of $\Phi = \{ \phi_i \}_{i \in I}$  with frame bounds $(B^{-1}, A^{-1})$. In our analysis, we denote by $\mathbb{D}_{\Phi}$ the set of all synthesis pseudo-dual of the frame $\Phi$, i.e.,
\begin{equation} \label{Eq2}
    \mathbb{D}_{\Phi}=\Big  \{ \Phi^d = \{ \phi^d_i \}_{i \in I} \in \mathcal{H} \mid f=\Phi^d \Phi^* f= \sum_{i \in I}\langle f, \phi_i \rangle \phi_i^d, \;\forall f \in \mathcal{H} \Big \}.
\end{equation} 
Obviously, $\mathbb{D}_{\Phi} \neq \emptyset$ since 
%\leave{given a frame $\Phi$ we have 
%$$ \mathbb{S}^{-1}\Phi = \{ \mathbb{S}^{-1} \phi_i\}_{i \in I} \in \{ \text{Set of all dual frames of } \Phi \} \subset \mathbb{D}_{\Phi} .$$}
for each frame $\Phi$ the frame
    \[ \mathbb{S}^{-1}\Phi = \{ \mathbb{S}^{-1} \phi_i\}_{i \in I} \]
is dual to $\Phi$ and therefore in $\mathbb{D}_{\Phi}$.
Consequently, if $\Phi $ is a Parseval frame we have $\Phi \in \mathbb{D}_\Phi.$ For more details on frame theory we refer to \cite{3,18,22}.

\subsection{Outline}
\textcolor{black}{The rest of the paper is organized as follows.} First we formulate the general image separation problem in Section \ref{1406-2}. Next, in Section \ref{1406-3} we provide a theoretical machinery which guarantees the success of  Algorithm 1 based on notions of joint concentration and cluster coherence of two general frames. We then present the model of pointlike and curvelike singularities and 
construct radial wavelets as well as a pair of dual bandlimited $\alpha$-shearlets in Section \ref{1406-4}. 
In Section \ref{1406-5} we finally present an asymptotic separation result 
of the proposed component models by $l^1-$minimization. We close with a conclusion and outlook to further applications in Section \ref{2906-1}.

\subsection{General component separation}
\leave{An issue in image processing  is that of geometric separation.}
Given an image $f$, we assume that $f$ \leave{are}\alex{can} be composed of two geometric components, i.e., 
\begin{equation} \label{1606-10}
    f= \mathcal{P}+ \mathcal{C},
\end{equation} 
where $\mathcal{P}, \mathcal{C}$ are two unknown components which we want to recover.
\leave{It seems to be unsolvable to separate such a underdetermined system without further information since the unknowns are twice as many as knowns.}
\alex{Since the unknowns are twice as many as the equations, the task of component separation is ill-posed without additional assumptions.}
However, we often have more information about \leave{purported}\alex{the} components $\mathcal{P}, \mathcal{C}$. 
Compressed sensing techniques enable us to exactly recover these components which are sparse in  appropriate dictionaries. Here, we assume that $\mathcal{P}$ is
 smooth away from point discontinuities and 
$\mathcal{C}$ is smooth away from curvilinear singularities. 
In our analysis, we use microlocal analysis to give a heuristic understanding of why separation might be possible.
The core ingredient is based on the idea  that important coefficients are clustered geometrically in phase space.

\subsection{Recovery via $l_1$-minimization }
We consider the following  algorithm \leave{which was }proposed in the past which used Parseval frames for separating two components. We extend it to the case of two general frames $\{ \Phi_1 \}_{i \in I}, \{ \Phi_2 \}_{j \in J} $ with frame bounds $(A_1, B_1)$ and $(A_2, B_2)$ respectively.

\vskip 1mm
\begin{algorithm}[H]
\SetAlgoLined
\KwData{observed image $f$, two frames $\{ \Phi_1 \}_{i \in I},  \{ \Phi_2 \}_{j \in J}$.}

\textbf{Compute:} $(\mathcal{P}^{\star}, \mathcal{C}^{\star}),$ where 
\begin{eqnarray}
    (\mathcal{P}^{\star}, \mathcal{C}^{\star}) &= & \argmin_{f_1, f_2} \| \Phi_1^* f_1 \|_1 + \| \Phi_2^* f_2 \|_1, 
     \quad \text{subject to} \quad f_1 + f_2 = f. \qquad \label{961}
   \end{eqnarray}  \vskip 0.5mm
\KwResult{ recovered components. $\mathcal{P}^{\star}, \mathcal{C}^{\star}$.} 
\vskip 1mm
 \caption{\label{AL1-chap3} Image separation}
\end{algorithm}
\vskip 1mm

We would like to remark that here we minimize the $l^1$ norm of \alex{the} analysis coefficients when expanding the components in two frames to exploit their geometric features underlying the image. 
\alex{The success of the algorithm \eqref{961} is proven later under}
\leave{It is proven later that we ensure the success of the algorithm \eqref{961} under}
prior information that each geometric component is \leave{most }captured by the corresponding frame.

\section{Theoretical guarantee for component separation}  \label{1406-3}
\subsection{Joint concentration analysis} The notion of \emph{joint concentration} was first introduced in \cite{12}, which used Parseval frames to propose an analyzing tool for deriving the theoretical guarantee. 
There, the  joint concentration associated with two Parseval frames $\Phi_1, \Phi_2$ and sets of indexes $\Lambda_1, \Lambda_2$ is defined by
$$ \kappa(\Lambda_1, \Lambda_2)= \sup_{f \in \mathcal{H}} \frac{\| \mathds{1}_{\Lambda_1}\Phi_1^* f \|_1 + \| \mathds{1}_{\Lambda_2} \Phi_2^* f\|_1  }{\| \Phi_1^* f \|_1 + \|\Phi_2^* f\|_1}.$$
 For an extension, we modify the joint concentration associated with two frames instead of Parseval frames based on the idea  that each  frame can sparsely represent each component but can not sparsely represent the other. This philosophy plays a central role in
 the success of the proposed algorithm.

\begin{definition}  Let $\Phi_1, \Phi_2$ be two frames. We define the \emph{joint concentration } $\bar{\kappa} = \bar{\kappa}(\Lambda_1, \Lambda_2)$ with respect to sets of coefficients $\Lambda_1, \Lambda_2$ by
$$ \bar{\kappa}(\Lambda_1, \Lambda_2)= \sup_{\mathcal{P}, \mathcal{C} \in \mathcal{H}} \frac{\| \mathds{1}_{\Lambda_1}\Phi_1^* \mathcal{P} \|_1 + \| \mathds{1}_{\Lambda_2} \Phi_2^* \mathcal{C}\|_1  }{\| \Phi_1^* \mathcal{C} \|_1 + \|\Phi_2^* \mathcal{P}\|_1}.$$ 
\end{definition}

\begin{definition}
 Fix $\delta > 0$ . Given a Hilbert space $\mathcal{H}$ with a frame $\Phi, f \in \mathcal{H} $ is \emph{$\delta-$relatively sparse} in $\Phi$ with respect to $\Lambda$ if $\| \mathds{1}_{\Lambda^c} \Phi^* f \|_1 \leq \delta , $ where $\Lambda^c $ denotes $X \setminus \Lambda.$
\end{definition} 

Under the assumption of joint concentration and $\delta$-relative sparsity of the components $\mathcal{P}$ and $\mathcal{C}$, we can guarantee the success of \eqref{961}, as the next proposition shows.

\begin{proposition} \label{1006-20}
 Let $\Phi_1, \Phi_2$ be two frames with frame bounds $(A_1, B_1), (A_2, B_2),$ respectively. For $\delta_1, \delta_2 > 0,$ we assume that  $f= \mathcal{P} +  \mathcal{C} $ where $\mathcal{P}, \mathcal{C}$ is $\delta_1, \delta_2$-relatively sparse in $\Phi_1$ and $\Phi_2$ with respect to $\Lambda_1$ and $\Lambda_2$, respectively. Let $(\mathcal{P}^\star, \mathcal{C}^\star)$ solve \eqref{961} and we have $\bar{\kappa} (\Lambda_1, \Lambda_2) < \frac{1}{2}$, then  
\begin{equation} 
\| \mathcal{P}^\star - \mathcal{P} \|_2 + \| \mathcal{C}^\star - \mathcal{C} \|_2 \leq \frac{2 \max\{B_1, B_2 \} (\delta_1+ 
\delta_2)}{1 - 2 \bar{\kappa}(\Lambda_1, \Lambda_2)}.
\end{equation}  
\end{proposition}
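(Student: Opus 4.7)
The plan is to follow the Donoho--Kutyniok Parseval-frame strategy and adapt only the final $\ell^1\to\ell^2$ conversion so as to handle general frames. First I would set the error $h := \mathcal{P}^\star - \mathcal{P}$; since $\mathcal{P}^\star + \mathcal{C}^\star = f = \mathcal{P} + \mathcal{C}$, we automatically have $\mathcal{C}^\star - \mathcal{C} = -h$, so the quantity to be bounded collapses to $2\|h\|_2$. Feasibility of $(\mathcal{P},\mathcal{C})$ for \eqref{961} together with optimality of $(\mathcal{P}^\star,\mathcal{C}^\star)$ yields
\[
\|\Phi_1^*(\mathcal{P}+h)\|_1 + \|\Phi_2^*(\mathcal{C}-h)\|_1 \leq \|\Phi_1^*\mathcal{P}\|_1 + \|\Phi_2^*\mathcal{C}\|_1.
\]

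Next I would split each $\ell^1$ norm according to $\Lambda_i$ versus $\Lambda_i^c$, using the triangle inequality on $\Lambda_i$ (to pull $h$ out) and the reverse triangle inequality on $\Lambda_i^c$ (to push $h$ in). After cancelling $\|\mathds{1}_{\Lambda_i}\Phi_i^*\mathcal{P}\|_1$ and $\|\mathds{1}_{\Lambda_2}\Phi_2^*\mathcal{C}\|_1$ from both sides, and absorbing the relative-sparsity bounds $\|\mathds{1}_{\Lambda_1^c}\Phi_1^*\mathcal{P}\|_1\le\delta_1$ and $\|\mathds{1}_{\Lambda_2^c}\Phi_2^*\mathcal{C}\|_1\le\delta_2$, one arrives at
\[
\|\mathds{1}_{\Lambda_1^c}\Phi_1^*h\|_1 + \|\mathds{1}_{\Lambda_2^c}\Phi_2^*h\|_1 \leq 2(\delta_1+\delta_2) + \|\mathds{1}_{\Lambda_1}\Phi_1^*h\|_1 + \|\mathds{1}_{\Lambda_2}\Phi_2^*h\|_1.
\]
Adding the $\Lambda_i$-restricted terms to both sides reconstructs the full analysis norms on the left.

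At this point I would invoke the joint concentration hypothesis with the symmetric choice $\mathcal{P}=\mathcal{C}=h$, which renders the numerator--denominator swap in the definition of $\bar{\kappa}$ benign and gives
\[
\|\mathds{1}_{\Lambda_1}\Phi_1^*h\|_1 + \|\mathds{1}_{\Lambda_2}\Phi_2^*h\|_1 \leq \bar{\kappa}(\Lambda_1,\Lambda_2)\bigl(\|\Phi_1^*h\|_1 + \|\Phi_2^*h\|_1\bigr).
\]
Substituting this into the previous line and using $\bar{\kappa}<\tfrac{1}{2}$ to solve for the total analysis norm produces
\[
\|\Phi_1^*h\|_1 + \|\Phi_2^*h\|_1 \leq \frac{2(\delta_1+\delta_2)}{1-2\bar{\kappa}(\Lambda_1,\Lambda_2)}.
\]

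The main obstacle, and the only place where the argument genuinely departs from the Parseval case, is turning this $\ell^1$ analysis bound into the stated $\ell^2$ bound on $h$. Using the identity $h = \Phi_i^d\,\Phi_i^* h$ for a suitable (pseudo-)dual frame $\Phi_i^d$ of $\Phi_i$, combining the synthesis operator norm of $\Phi_i^d$ with the elementary sequence inequality $\|\cdot\|_2 \leq \|\cdot\|_1$, yields an estimate of the form $\|h\|_2 \leq C_i\,\|\Phi_i^* h\|_1$, where the constant $C_i$ is governed by the frame bounds $(A_i,B_i)$; aggregating the two estimates produces the factor $\max\{B_1,B_2\}$ appearing in the statement. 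Plugging this into the bound above, and recalling that $\|\mathcal{P}^\star-\mathcal{P}\|_2 + \|\mathcal{C}^\star-\mathcal{C}\|_2 = 2\|h\|_2$, finishes the proof.
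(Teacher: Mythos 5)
Your proposal is correct and follows essentially the same route as the paper: optimality of $(\mathcal{P}^\star,\mathcal{C}^\star)$, the $\Lambda_i$ versus $\Lambda_i^c$ splitting with the relative-sparsity bounds, the symmetric substitution $\mathcal{P}=\mathcal{C}=h$ into $\bar{\kappa}$, and solving $S\le 2\bar{\kappa}S+2(\delta_1+\delta_2)$; the only difference is that you perform the $\ell^1$-to-$\ell^2$ conversion at the end via a pseudo-dual synthesis operator, whereas the paper does it at the outset directly from the frame inequality. Be aware that both your step and the paper's are equally loose about the constant: the frame inequality (or the canonical dual's synthesis norm) naturally yields $\max\{A_1^{-1/2},A_2^{-1/2}\}$ rather than $\max\{B_1,B_2\}$, so neither argument actually derives the stated factor.
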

\begin{proof}

For convenience, we set
$\bar{\kappa}:= \bar{\kappa}(\Lambda_1, \Lambda_2),  \delta = \delta_1 + \delta_2,$ and  $err:= \mathcal{P}^\star - \mathcal{P} = \mathcal{C} - \mathcal{C}^\star.$ Note here that we have $\mathcal{P}^\star + \mathcal{C}^{\star} = \mathcal{P} + \mathcal{C}=f.$
By the upper frame bounds of $\Phi_1, \Phi_2$, we obtain
\begin{eqnarray} 
\|  \mathcal{P}^\star - \mathcal{P} \|_2 + \| \mathcal{C}^\star - \mathcal{C} \|_2 & \leq & \max\{B_1, B_2 \} \Big ( \| \Phi_1^*(\mathcal{P}^\star - \mathcal{P} ) \|_2 + \| \Phi_2^*(\mathcal{C}^\star - \mathcal{C} ) \|_2 \Big ) \nonumber \\
& \leq &  \max\{B_1, B_2 \} \Big ( \| \Phi_1^*(err) \|_1 + \| \Phi_2^*(err) \|_1 \Big ). \label{ine:first}
\end{eqnarray}
Thus, we have
\begin{eqnarray*}
&& S := \| \Phi_1^*(err) \|_1 + \| \Phi_2^*(err) \|_1 \\
&\leq & \| \mathds{1}_{\Lambda_1} \Phi_1^* (err) \|_1 +\| \mathds{1}_{\Lambda_2} \Phi_2^* (err) \|_1 + \| \mathds{1}_{\Lambda_1^c} \Phi_1^* (\mathcal{P}^\star - \mathcal{P})\|_1 + \| \mathds{1}_{\Lambda_2^c} \Phi_2^* (\mathcal{C}^\star - \mathcal{C}) \|_1 \\
& \leq & \bar{\kappa} S + \| \mathds{1}_{\Lambda_1^c} \Phi_1^* \mathcal{P}^\star \|_1 +\| \mathds{1}_{\Lambda_2^c} \Phi_2^* \mathcal{C}^{\star} \|_1 + \| \mathds{1}_{\Lambda_1^c} \Phi_1^* \mathcal{P} \|_1 +\| \mathds{1}_{\Lambda_2^c} \Phi_2^* \mathcal{C}\|_1 \\
& \leq & \bar{\kappa} S + \| \mathds{1}_{\Lambda_1^c} \Phi_1^* \mathcal{P}^\star \|_1 + \| \mathds{1}_{\Lambda_2^c} \Phi_2^* \mathcal{C}^\star \|_1 +\delta_1 + \delta_2 \\
 & = & \bar{\kappa} S + \delta +  \| \Phi_1^* \mathcal{P}^\star \|_1 + \| \Phi_2^* \mathcal{C}^\star \|_1  - \| \mathds{1}_{\Lambda_1} \Phi_1^* \mathcal{P}^\star \|_1 - \| \mathds{1}_{\Lambda_2} \Phi_2^* \mathcal{C}^\star \|_1.
\end{eqnarray*}
We now exploit that $(\mathcal{P}^\star, \mathcal{C}^\star) $ is a minimizer of \eqref{961}. Therefore, we obtain
\begin{eqnarray*}
\| \Phi_1^* \mathcal{P}^\star \|_1 + \| \Phi_2^* \mathcal{C}^\star \|_1  &\leq & \| \Phi_1^* \mathcal{P} \|_1 + \| \Phi_2^* \mathcal{C} \|_1 .
\end{eqnarray*}
Thus, the triangle inequality yields
\begin{eqnarray*}
S &\leq & \bar{\kappa} S + \delta + \| \Phi_1^* \mathcal{P} \|_1 + \| \Phi_2^* \mathcal{C} \|_1 -\| \mathds{1}_{\Lambda_1} \Phi_1^* \mathcal{P}^\star \|_1 - \| \mathds{1}_{\Lambda_2} \Phi_2^* \mathcal{C}^\star \|_1  \\
&\leq & \bar{\kappa} S + \delta + \| \Phi_1^* \mathcal{P} \|_1 + \| \Phi_2^* \mathcal{C} \|_1 + \|\mathds{1}_{\Lambda_1} \Phi_1^* err\|_1 + \|\mathds{1}_{\Lambda_2} \Phi_2^* err\|_1-\| \mathds{1}_{\Lambda_1} \Phi_1^* \mathcal{P} \|_1  
 \\
 && - \| \mathds{1}_{\Lambda_2} \Phi_2^* \mathcal{C} \|_1  \\
&\leq & \bar{\kappa} S + 2 \delta + \bar{\kappa} S = 2\bar{\kappa} S + 2 \delta.
\end{eqnarray*} 
This implies   $$ S \leq \frac{2 \delta}{1 - 2 \bar{\kappa}},$$
and the claim follows in combination with \eqref{ine:first}.

\end{proof}

\subsection{Theoretical guarantee via Cluster coherence}

Typically, the information on each component is  encoded by a particular choice of the clusters $\Lambda_1, \Lambda_2$ through the number of non-zero coefficients.  By choosing such clusters we might get the successful recovery by Proposition \ref{1006-20}, but it seems hard to achieve the joint concentration. 
The notion of \emph{cluster coherence} was first introduced in \cite{12} in an attempt to transfer the theoretical guarantee based on the notion of joint concentration to another analyzing tool that enables us to check in practice. They used Parseval frames in their definition. In our paper, we modify this ansatz and  extend it to the case of generic frames instead of Parseval frames.
\begin{definition} 
Given two frames $\Phi_1 = (\Phi_{1i})_{i \in I}  $ and $\Phi_2 = (\Phi_{2j})_{j \in  J} $ with frame bounds $(A_1, B_1), (A_2, B_2)$, the \emph{cluster coherence} $\mu_c(\Lambda, \Phi_1; \Phi_2) $ of $\Phi_1$ and $\Phi_2$ with respect to the index set $\Lambda \subset I $ is defined by
$$\mu_c(\Lambda, \Phi_1; \Phi_2)=  \max_{j\in J} \sum_{i \in \Lambda} | \langle \phi_{1i}, \phi_{2j} \rangle |.$$
\end{definition}

Formulated in this way, the notion of cluster coherence encodes the geometric difference between the components in a way that can be checked in practice.
The following lemma allows to bound the joint concentration from above by the cluster coherence.
\begin{lemma} \label{1206-6}
We have
$$ \bar{\kappa}_1(\Lambda_1, \Lambda_2) \leq \inf_{\Phi_1^{d} \in \mathbb{D}_{\Phi_1}, \Phi_2^{d} \in \mathbb{D}_{\Phi_2}} \max \{ \mu_c(\Lambda_1, \Phi_1; \Phi_2^d), \mu_c(\Lambda_2, \Phi_2; \Phi_1^d)\} .$$
\end{lemma}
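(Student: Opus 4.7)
The plan is to exploit the reproducing formula supplied by any synthesis pseudo-dual to re-express the "wrong" frame coefficients in the numerator in terms of the "right" frame coefficients appearing in the denominator. Concretely, fix arbitrary $\Phi_1^d \in \mathbb{D}_{\Phi_1}$ and $\Phi_2^d \in \mathbb{D}_{\Phi_2}$ and let $\mathcal{P}, \mathcal{C} \in \mathcal{H}$. By definition of $\mathbb{D}_{\Phi_2}$ we have $\mathcal{P} = \Phi_2^d \Phi_2^* \mathcal{P} = \sum_{j \in J} \langle \mathcal{P}, \phi_{2j}\rangle \phi_{2j}^d$, and analogously for $\mathcal{C}$ using $\Phi_1^d$.

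Plugging the first expansion into each inner product $\langle \mathcal{P}, \phi_{1i}\rangle$ in $\|\mathds{1}_{\Lambda_1} \Phi_1^* \mathcal{P}\|_1$, applying the triangle inequality, and exchanging the order of summation produces
\begin{equation*}
\sum_{i \in \Lambda_1} |\langle \mathcal{P}, \phi_{1i}\rangle| \le \sum_{j \in J} |\langle \mathcal{P}, \phi_{2j}\rangle| \sum_{i \in \Lambda_1} |\langle \phi_{2j}^d, \phi_{1i}\rangle| \le \mu_c(\Lambda_1, \Phi_1; \Phi_2^d) \, \|\Phi_2^* \mathcal{P}\|_1,
\end{equation*}
where the last step pulls the inner sum out by its maximum over $j$, which is exactly the cluster coherence. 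An identical calculation on $\mathcal{C}$ with the roles of the frames swapped yields $\|\mathds{1}_{\Lambda_2} \Phi_2^* \mathcal{C}\|_1 \le \mu_c(\Lambda_2, \Phi_2; \Phi_1^d) \|\Phi_1^* \mathcal{C}\|_1$.

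Adding the two bounds and estimating both prefactors by their maximum gives
\begin{equation*}
\|\mathds{1}_{\Lambda_1}\Phi_1^* \mathcal{P}\|_1 + \|\mathds{1}_{\Lambda_2}\Phi_2^* \mathcal{C}\|_1 \le \max\{\mu_c(\Lambda_1, \Phi_1; \Phi_2^d), \mu_c(\Lambda_2, \Phi_2; \Phi_1^d)\} \bigl(\|\Phi_1^*\mathcal{C}\|_1 + \|\Phi_2^*\mathcal{P}\|_1\bigr).
\end{equation*}
Dividing by the denominator in the definition of $\bar{\kappa}$ and taking the supremum over $\mathcal{P}, \mathcal{C}$ removes that factor; since the pseudo-duals were arbitrary, taking the infimum over $\mathbb{D}_{\Phi_1} \times \mathbb{D}_{\Phi_2}$ delivers the claim.

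The argument is short and essentially combinatorial; I do not anticipate a real obstacle. The only subtlety is bookkeeping the cross-over between $\mathcal{P}/\mathcal{C}$ and $\Phi_1/\Phi_2$ in the definition of $\bar{\kappa}$: one must apply the reproducing identity coming from $\Phi_2^d$ to $\mathcal{P}$ (not to $\mathcal{C}$), so that the resulting denominator term matches $\|\Phi_2^*\mathcal{P}\|_1$ rather than $\|\Phi_2^*\mathcal{C}\|_1$. Aside from this, verifying the exchange of summations and bounding by the maximum $j$ is routine, and nothing in the argument requires either frame to be tight or Parseval — which is precisely what allows the lemma to extend the corresponding Parseval-frame result of \cite{12}.
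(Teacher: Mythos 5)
Your argument is correct and coincides with the paper's own proof: both expand $\mathcal{P}$ via $\Phi_2^d\Phi_2^*$ and $\mathcal{C}$ via $\Phi_1^d\Phi_1^*$, apply the triangle inequality, swap the order of summation, and bound the inner sum by the cluster coherence before taking the supremum and then the infimum over pseudo-duals. The crossover bookkeeping you flag is exactly the point the paper's proof handles by setting $\alpha_1=\Phi_1^*\mathcal{C}$ and $\alpha_2=\Phi_2^*\mathcal{P}$, so there is nothing further to add.
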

\begin{proof}
For $\mathcal{P}, \mathcal{C} \in \mathcal{H}, \Phi_1^{d} \in \mathbb{D}_{\Phi_1}, \Phi_2^{d} \in \mathbb{D}_{\Phi_2},$  we have $\mathcal{P}= \sum_{j \in J}\langle \mathcal{P}, \phi_{2j} \rangle \phi_{2j}^d, \; \mathcal{C}= \sum_{i \in I}\langle \mathcal{C}, \phi_{1i} \rangle \phi_{1i}^d.$ In the other words, $\mathcal{P}= \Phi_2^d \Phi_2^* \mathcal{P}, \; \mathcal{C}=\Phi_1^d \Phi_1^* \mathcal{C}$. Now we set $\alpha_1=\Phi_1^* \mathcal{C}, \; \alpha_2= \Phi_2^* \mathcal{P}, $ we then obtain $\mathcal{P}=\Phi_2^d \alpha_2,\; \mathcal{C}=\Phi_1^d \alpha_1.$
Therefore, we have
\begin{eqnarray*}
&& \| \mathds{1}_{\Lambda_1} \Phi_1^* \mathcal{P}\|_1 + \| \mathds{1}_{\Lambda_2} \Phi_2^* \mathcal{C}\|_1 
 =\| \mathds{1}_{\Lambda_1} \Phi_1^* \Phi_2^d \alpha_2 \|_1 + \| \mathds{1}_{\Lambda_2} \Phi_2^* \Phi_1^d \alpha_1 \|_1 \\
& \leq &  \sum_{i \in \Lambda_1} \Big( \sum_j | \langle \Phi_{1i}, \Phi^d_{2j} \rangle | |\alpha_{2j}| \Big) +  \sum_{j \in \Lambda_2} \Big( \sum_i | \langle  \Phi_{2j}, \Phi^d_{1i} \rangle | |\alpha_{1i}| \Big) \\
&  = &  \sum_{j} \Big( \sum_{i \in \Lambda_1} | \langle \Phi_{1i}, \Phi^d_{2j} \rangle | \Big) |\alpha_{2j}|  +  \sum_{i} \Big( \sum_{j \in \Lambda_2}| \langle \Phi_{2j}, \Phi^d_{1i} \rangle | \Big) |\alpha_{1i}| \\
& \leq & \mu_c(\Lambda_1, \Phi_1; \Phi^d_2) \| \alpha_2 \|_1 + \mu_c(\Lambda_2, \Phi_2; \Phi^d_1) \| \alpha_1 \|_1 \\
& \leq & \max \{ \mu_c(\Lambda_1, \Phi_1; \Phi_2^d), \mu_c(\Lambda_2, \Phi_2; \Phi_1^d)\} (\| \alpha_2 \|_1 + \| \alpha_1 \|_1 ) \\
& =  & \max \{ \mu_c(\Lambda_1, \Phi_1; \Phi_2^d), \mu_c(\Lambda_2, \Phi_2; \Phi_1^d)\} (\|\Phi_2^* \mathcal{P} \|_1 + \| \Phi_1^* \mathcal{C} \|_1 ).
\end{eqnarray*}
Thus, we obtain $\bar{\kappa}_1(\Lambda_1, \Lambda_2) \leq \max \{ \mu_c(\Lambda_1, \Phi_1; \Phi^d_2), \mu_c(\Lambda_2, \Phi_2; \Phi^d_1)\},$  $\forall \Phi_1^{d} \in \mathbb{D}_{\Phi_1}, \forall \Phi_2^{d} \in \mathbb{D}_{\Phi_2}.$ This completes the proof.
\end{proof}

We can now present our theoretical guarantee for the procedure \eqref{961} to be convergent using two generic frames instead of Parseval frames.

\begin{Th} \label{1206-5}
 Let $\Phi_1, \Phi_2$ be two frames with frame bounds $(A_1, B_1), (A_2, B_2),$ respectively. For $\delta_1, \delta_2 > 0,$ we suppose that $f \in \mathcal{H}$ can be decomposed as $f= \mathcal{P} +  \mathcal{C} $ so that each component $\mathcal{P}, \mathcal{C}$ is $\delta_1, \delta_2-$relatively sparse in $\Phi_1$ and $\Phi_2$ with respect to $\Lambda_1$ and $\Lambda_2$, respectively. Let $(\mathcal{P}^\star, \mathcal{C}^\star)$ solve \eqref{961}. If we have $\mu_c(\Lambda_1, \Lambda_2):=\inf\limits_{\Phi_1^{d} \in \mathbb{D}_{\Phi_1}, \Phi_2^{d} \in \mathbb{D}_{\Phi_2}}  \max \{ \mu_c(\Lambda_1, \Phi_1; \Phi^d_2), \mu_c(\Lambda_2, \Phi_2; \Phi^d_1)\} < \frac{1}{2}$, then  
\begin{equation}  \label{1306-1}
\| \mathcal{P}^\star - \mathcal{P} \|_2 + \| \mathcal{C}^\star - \mathcal{C} \|_2 \leq \frac{2 \max\{B_1, B_2 \} (\delta_1+ 
\delta_2)}{1 - 2 \mu_c(\Lambda_1, \Lambda_2)}.
\end{equation}  
\end{Th}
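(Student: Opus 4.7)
The plan is essentially to chain together the two results immediately preceding the theorem. Proposition \ref{1006-20} already delivers a bound of the same shape, namely
\[
\|\mathcal{P}^\star - \mathcal{P}\|_2 + \|\mathcal{C}^\star - \mathcal{C}\|_2 \leq \frac{2\max\{B_1,B_2\}(\delta_1+\delta_2)}{1 - 2\bar{\kappa}(\Lambda_1,\Lambda_2)},
\]
but stated in terms of the joint concentration $\bar{\kappa}$. Lemma \ref{1206-6} in turn bounds $\bar{\kappa}(\Lambda_1,\Lambda_2)$ above by $\mu_c(\Lambda_1,\Lambda_2)$ (the infimum over all synthesis pseudo-duals of the maximum of the two cluster coherences). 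So the proof will simply substitute this estimate into the conclusion of Proposition \ref{1006-20}.

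First I would verify the hypothesis of Proposition \ref{1006-20} is satisfied. The standing assumptions on $f = \mathcal{P} + \mathcal{C}$ and the relative sparsity of $\mathcal{P},\mathcal{C}$ carry over verbatim. The remaining condition $\bar{\kappa}(\Lambda_1,\Lambda_2) < \tfrac12$ is exactly where Lemma \ref{1206-6} enters: by that lemma, $\bar{\kappa}(\Lambda_1,\Lambda_2) \leq \mu_c(\Lambda_1,\Lambda_2)$, and the theorem's hypothesis $\mu_c(\Lambda_1,\Lambda_2) < \tfrac12$ immediately gives $\bar{\kappa}(\Lambda_1,\Lambda_2) < \tfrac12$. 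Thus Proposition \ref{1006-20} is applicable.

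Next I would propagate the inequality into the final bound. The function $t \mapsto \frac{2\max\{B_1,B_2\}(\delta_1+\delta_2)}{1-2t}$ is monotonically increasing on $[0, \tfrac12)$, so replacing $\bar{\kappa}(\Lambda_1,\Lambda_2)$ by the larger quantity $\mu_c(\Lambda_1,\Lambda_2)$ weakens the bound in the direction we want, producing exactly \eqref{1306-1}.

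There is no real obstacle here; the theorem is a packaging step that combines Proposition \ref{1006-20} with Lemma \ref{1206-6}. The only fine points to mention in the write-up are the monotonicity observation that justifies the substitution and the fact that the infimum over dual frames in the definition of $\mu_c(\Lambda_1,\Lambda_2)$ is the one inherited from Lemma \ref{1206-6}, so the hypothesis of the theorem is stated in the same form as the right-hand side of that lemma's estimate. Once these are noted, the bound \eqref{1306-1} follows in a single line.
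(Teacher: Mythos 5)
Your proposal is correct and is exactly the paper's argument: the paper's proof is the one-line statement that the theorem follows from Proposition \ref{1006-20} combined with Lemma \ref{1206-6}. Your additional remark on the monotonicity of $t \mapsto \frac{2\max\{B_1,B_2\}(\delta_1+\delta_2)}{1-2t}$ on $[0,\tfrac12)$ is a fine point the paper leaves implicit, but it is the right justification for substituting the larger quantity into the denominator.
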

\begin{proof}
The proof follows directly from Proposition \ref{1006-20} and Lemma \ref{1206-6}.
\end{proof}

We would like to remark that in case $\Phi_1, \Phi_2$ are Parseval frames we have  $\mu_c(\Lambda_1, \Lambda_2) \leq \max \{ \mu_c(\Lambda_1, \Phi_1; \Phi_2), \mu_c(\Lambda_2, \Phi_2; \Phi_1)\}$ since $\Phi_1 \in \mathbb{D}_{\Phi_1}, \Phi_2 \in \mathbb{D}_{\Phi_2}.$ This consequence is exactly the result used in several papers \cite{2,10,12,20,23} which chose Parseval frames as sparsifying systems. Thus, our theoretical guarantee based on the notion of cluster coherence \eqref{1306-1} is  more general than are shown in aforementioned papers since we can choose an other dual instead of using itself. \textcolor{black}{Although it is not easy to construct a dual, this theoretical guarantee  actually shows that we can use synthesis pseudo-dual's properties instead of its explicit construction. The nice property we need in our analysis is that  dual frame elements have good time-frequency localization which leads to small cluster coherence.} 

\textcolor{black}{For \alex{the} construction of a shearlet dual, some \alex{works} \cite{21,24} have shown \leave{in  which  possess }desirable properties such as well \leave{localized}\alex{localization} and highly directional sensitivity.  In \alex{the} next section, we use the approach from \cite{21} to construct a dual 
pair of bandlimited $\alpha$-shearlets for our analysis.}
\section{Component separation } \label{1406-4}
\subsection{Mathematical model of components}

Consider the image separation problem \eqref{1606-10}, we now introduce the model of components. 
In our analysis, we assume that the pointlike part $\mathcal{P}$ is modeled as 
\begin{equation} \label{EQ60-chap3}
    \mathcal{P}(x)= \sum_{1}^K c_i | x- x_i |^{-\lambda_i},
\end{equation}
where two sequences of constants $\{ \lambda_i \}_{i=1}^K, \{c_i\}_{i=1}^K$ satisfy $0< \lambda_i <2, 0 < c_i, \; \forall i=1,2,\dots, K $.
We choose $\lambda_i <2$ to bound the energy of the component in frequency domain as the scale goes finer. The choice of $\{ \lambda_i \}_{i=1}^K$  depends on each problem of image separation which make components comparable.

For the curvilinear singularities, we first recall the \emph{Schwartz functions} or the \emph{rapidly decreasing functions}
\begin{equation*} \label{1606-1}
     \mathcal{S}(\mathbb{R}^2)= \Big \{ f \in C^\infty(\mathbb{R}^2)  \mid   \forall K, N \in \mathbb{N}_0,\sup_{x \in \mathbb{R}^2} (1 + |x|^2 )^{-N/2} \sum_{|\alpha |\leq K} | D^\alpha f(x) | < \infty \Big \}. 
\end{equation*}
Let $\sigma: [0,1] \rightarrow \mathbb{R}^2$ be a closed $C^{\beta}$ curve, $\beta \in (1,2],$ with non vanishing curvature everywhere. 
We first consider the model of curvilinear singularity $\mathcal{C}$ as
\begin{equation}
    \mathcal{C}=\int_{\mathbb{R}^2} \delta_{\sigma(t)} dt, 
\end{equation}
where $\delta_x$ denotes the usual Dirac delta distribution located at $x$.
It is well known that the class of $\alpha$-shearlets using $\alpha$-scaling can sparsely represent such curvilinear structures \cite{42,43}. 
For the sake of simplicity, we restrict our model to the case of \emph{line singularity}. The reader should be aware of the fact that by Tubular neighborhood theorem we can extend it to the general case. Intuitively, we can use the technique in \cite{12} to first partition the curve $\sigma$ into small pieces and then bend them to the form of a line singularity, see \cite[Section 6]{12} for details.

Similarly as introduced in several papers \cite{15,39}, we \leave{consider }model \leave{of }the line distribution $w \mathcal{L}$ acting on Schwartz functions by
\begin{equation} \label{EQ61-chap3}
\langle w \mathcal{L} , f \rangle = \int_{\rho}^{\rho}  w(x_1) f(x_1, 0) dx_1, \; f \in \mathcal{S}(\mathbb{R}^2),
\end{equation} 
where $w$ is a weighted function such that $0 \not\equiv w \in C^{\infty}(\mathbb{R}),\supp w \subset [-\rho, \rho ],$ for some $\rho >0$, and $0 \leq w(x) \leq 1, \forall x \in [-\rho, \rho]. $ 
 For such a setting, we now approach the question if it possible to separate point-singularities and the line singularities using wavelets and bandlimited $\alpha$-shearlet types which interpolate from wavelet type ($\alpha=2$) to shearlet type ($\alpha=1$). In our analysis, we prove that we can separate them as long as bandlimited $\alpha$-shearlets do not coincide with the wavelet type, i.e., $2 >\alpha  \geq 1.$
 
Among well-known systems, wavelets  provide an optimal sparse representation to the
pointwise singularities, whereas shearlets are efficient for curvilinear structures.
In what follows, we introduce the construction of these two sparsifying systems.
\subsection{Wavelet frames}
To sparsely present $\mathcal{P}$, we choose radial wavelets which form a Parseval frame with perfectly isotropic generating elements.
We modify the construction of radial wavelets as follows.

Let $\Xi$ be a Schwartz function on $\mathbb{R}^2$ such that  $\supp \hat{\Xi} \subset [-\frac{1}{16}, \frac{1}{16}]$, $0 \leq \hat{\Xi}(\theta) \leq 1$ for $\theta \in\mathbb{R}$ and $\hat{\Xi}(\theta) =1$ for $\theta \in [-\frac{1}{32}, \frac{1}{32}]$. We now define the \emph{low-pass function} $\Omega(\xi)$ and the \emph{window function} $W(\xi)$ for $j \in \mathbb{N} $ and $\xi= (\xi_1,\xi_2 ) \in \mathbb{R}^2,$
\begin{equation}
     \hat{\Omega}(\xi):= \hat{\Xi}(\xi_1) \hat{\Xi}(\xi_2), \label{2108-5}
\end{equation}
\begin{equation} 
    W(\xi):= \sqrt{\hat{\Omega}^2(2^{-2}\xi)-\hat{\Omega}^2(\xi)}, \quad W_j(\xi):= W(2^{-2j} \xi). \label{11061} 
\end{equation} 
By definition, $\sup W \subset [-\frac{1}{4}, \frac{1}{4}]^2 \setminus [-\frac{1}{32}, \frac{1}{32}]^2 $ and $W_j$ is compactly supported in the corona
\begin{equation} \label{Pr2_EQ11}
 \mathcal{A}_j := [-2^{2j-2}, 2^{2j-2}]^2 \setminus [-2^{2j-5}, 2^{2j-5}]^2 , \quad  \forall j \geq 1.
\end{equation}
In addition, we obtain the partition of unity property
\begin{equation} \label{Pr2_CT3}
 \hat{\Omega}^2(\xi) + \sum_{j \geq 0 } W_j^2(\xi) = 1, \quad \forall \xi \in \mathbb{R}^2.
\end{equation}
The radial wavelets $\mathbf{\Psi} = \{ \psi_{j,m}\}_{(j,m)} \cup \{\Omega(\cdot-m) \}, j \in \mathbb{N}_0, m \in \mathbb{Z}^2 $ which form a Pareval frame are then defined by their Fourier transforms
$$ \hat{\psi}_{j,m}(\xi)=2^{-2j}W_j(\xi)e^{2 \pi i\xi^Tm/2^{2j}},  j \in \mathbb{N}_0, m \in \mathbb{Z}^2.$$
Since the low frequency part is not of interest to us in our analysis, we therefore simply write $\mathbf{\Psi}= \{ \psi_{j,m} \}_{(j,m)}$ at some points.
\subsection{A pair of bandlimited $\alpha$-shearlet dual  frames} \label{2506-1}
This section is devoted to the construction of a pair of \emph{bandlimited $\alpha$-shearlet} dual frames which possess many desirable properties. We choose shearlets as it is widely accepted that shearlets in general provide optimal sparse representation for images which \text{are} governed by curvilinear structures \cite{6,13,2}. 
Motivated by \cite{21}, we modify the construction of the shearlet frame pair and extend it to the case of $\alpha$-scaling instead of parabolic scaling since
$\alpha$-shearlet type with $\alpha=\frac{2}{\beta}$, $\alpha \in [1,2)$, might be best adapted to $C^{\beta}$ curvilinear singularities. 
In addition, we modify the Fourier domain decomposition to make it comparable to the wavelet frames.

We first define
the  \emph{scaling} and \emph{shearing matrix}   by
\begin{equation} \label{EQ33-chap3}
A_{\alpha, \rm{h}} := 
\begin{bmatrix}
    2^2   & 0 \\
      0  & 2^\alpha \\
   \end{bmatrix}, \quad S_{\rm{h}} := \begin{bmatrix}
    1  & 1 \\
      0  & 1 \\
   \end{bmatrix} ,
\end{equation}

\begin{equation} \label{EQ34-chap3}
 A_{\alpha, \rm{v}} := 
\begin{bmatrix}
    2^\alpha & 0 \\
      0  & 2^2 \\
   \end{bmatrix}, \quad S_{\rm{v}} := \begin{bmatrix}
    1  & 0 \\
      1  & 1 \\
   \end{bmatrix},
\end{equation}
where $\alpha \in [1, 2)$ is the \emph{scaling parameter}. 
Let $v \in C^\infty (\mathbb{R})$ be a \emph{bump function} such that $\supp v \subset [-\frac{3}{2},\frac{3}{2}] $ and 
\begin{equation*}
    \sum_{l=-2}^2| v(\omega -l)|^2  =1, \text{ for }  \omega \in [-\frac{3}{2},\frac{3}{2}].
\end{equation*}
Consequently, the following holds \alex{for }$\leave{\forall }j \geq 0,  \omega \in [-\frac{3}{2},\frac{3}{2}] ,$ 
\begin{equation}
   \sum_{l=-\lceil 2\cdot 2^{(2-\alpha )j} \rceil}^{\lceil  2\cdot 2^{(2-\alpha )j} \rceil} |v(2^{(2-\alpha)j} \omega -l)|^2=1. \label{866}
\end{equation}

Next, we define the  \emph{cone functions} $V_1, V_2$ by
 \begin{equation} \label{Pr2_EQ32}
 V_{\rm{h}}(\xi): = v\Big ( \frac{\xi_2}{\xi_1} \Big ), \quad V_{\rm{v}}(\xi): = v\Big ( \frac{\xi_1}{\xi_2} \Big ). 
 \end{equation}
\emph{horizontal frequency cone} and the \emph{vertical frequency cone}
\begin{equation} 
\mathcal{C}_{\rm{h}} := \Big \{ (\xi_1, \xi_2) \in \mathbb{R}^2 :  |\xi_1| \geq \frac{1}{8}, \Big |\frac{\xi_2}{\xi_1} \Big | \leq \frac{3}{2}  \Big \},
\end{equation}
\begin{equation} 
     \mathcal{C}_{\rm{v}} := \Big \{ (\xi_1, \xi_2) \in \mathbb{R}^2 :|\xi_2| \geq \frac{1}{8}, \Big  |\frac{\xi_1}{\xi_2} \Big | \leq \frac{3}{2}  \Big \},
\end{equation}
and low frequency part
\begin{equation} \label{0407-1}
     \mathcal{C}_{\rm{0}} := \Big \{ (\xi_1, \xi_2) \in \mathbb{R}^2 : |\xi_1|, |\xi_2| \leq 1\}.
\end{equation}

A system of shearlets is then defined by
$$ \mathbf{\Phi}= \Big \{  \phi^{\alpha, \iota}_{j,l,k} (x), \iota= \{\rm{h, v} \}, j \in \mathbb{N}, - \lceil 2 \cdot 2^{(2-\alpha )j} \rceil \leq l \leq  \lceil 2 \cdot 2^{(2-\alpha )j} \rceil, k \in \mathbb{Z}^2   \Big \},$$
where $\hat{\phi}^{\alpha, \iota}_{j,l,k}(\xi)=W_j(\xi) V_\iota \Big ( \xi^T A_{\alpha, \iota}^{-j} S^{-l}_\iota \Big ) e^{2\pi i \xi^T A_{\alpha, \iota}^{-j} S^{-l}_{\iota} k}, \iota=\{\rm{h,v} \}.$
For an illustration,  Fig. \ref{2108-2} shows the tiling of the frequency domain induced by shearlets.

By the definition $\hat{\phi}_{j,l,k}^{\alpha, \rm{\iota}} $ has compact support in the trapezoidal region
\begin{equation}
     \supp \hat{\phi}_{j,l,k}^{\alpha, \rm{v}}=\Big \{  \xi \in \mathbb{R}^2 \ | \ \xi_2 \in [-2^{2j-2}, 2^{2j-2}] \setminus [-2^{2j-5}, 2^{2j-5}], \Big | \frac{\xi_1}{\xi_2} - l2^{-(2-\alpha )j} \Big | \Big \}, \label{96100}
 \end{equation}
  \begin{equation}
    \supp \hat{\phi}_{j,l,k}^{\alpha, \rm{h}}=\Big \{  \xi \in \mathbb{R}^2 \ | \ \xi_1 \in [-2^{2j-2}, 2^{2j-2}] \setminus [-2^{2j-5}, 2^{2j-5}], \Big | \frac{\xi_2}{\xi_1} - l2^{-(2-\alpha )j} \Big | \Big \}. \label{96101}
 \end{equation}
For convenience, we also define two following compact sets   
\begin{equation}
\mathcal{C}^{*}_{\rm{h}} :=\Big \{ \xi \in \mathbb{R}^2: \frac{1}{32} \leq |\xi_1| \leq \frac{1}{4}, \Big | \frac{\xi_2}{\xi_1} \Big | \leq \frac{3}{2}  \Big \}  \supset \supp \hat{\phi}_{j,l,k}^{\alpha, \rm{h}}(\xi^T S^{l}_{\rm{h}}A^{j}_{\alpha, \rm{h}} ), \label{867}
\end{equation}

\begin{equation}
    \mathcal{C}^{*}_{\rm{v}} :=\Big \{ \xi \in \mathbb{R}^2: \frac{1}{32} \leq |\xi_2| \leq \frac{1}{4}, \Big | \frac{\xi_1}{\xi_2} \Big | \leq \frac{3}{2}  \Big \}  \supset \supp \hat{\phi}_{j,l,k}^{\alpha, \rm{v}}(\xi^T S^{l}_{\rm{v}}A^{j}_{\alpha, \rm{v}} ). \label{868}
\end{equation}

\begin{figure}[H] 
\centering
\includegraphics[width=220pt, height=170pt]{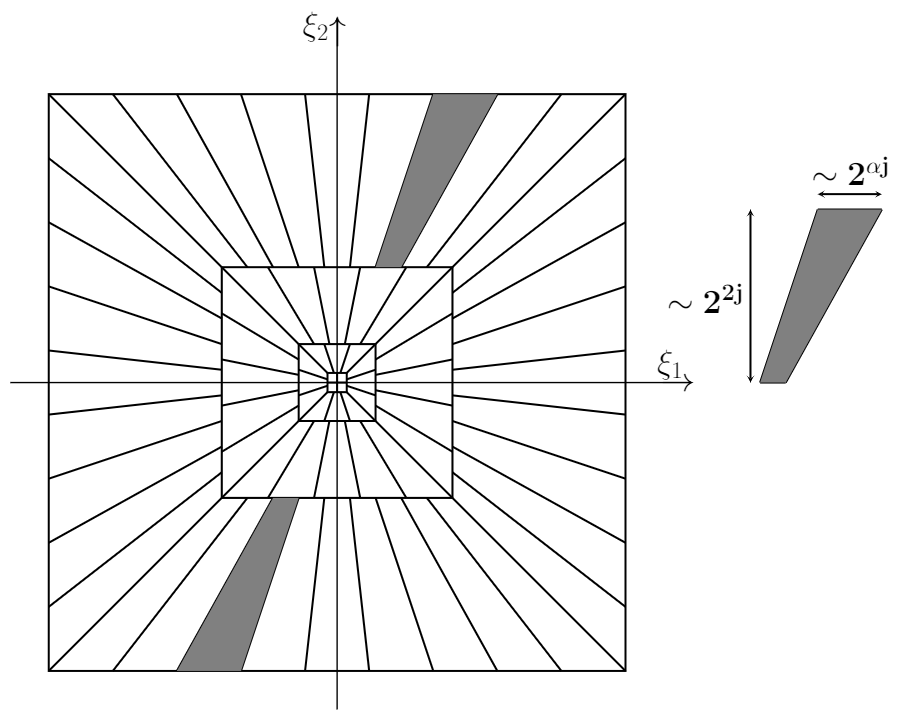}
\caption{\label{2108-2} Frequency tiling of shearlets. } 
\end{figure} 

The key ingredient for the construction of a pair of dual shearlet frames is derived from the following Parseval frames for sub-spaces $L^2(\mathcal{C}_{\rm{h}})^\vee, L^2(\mathcal{C}_{\rm{v}})^{\vee}$ which can be then aggregated to  form a frame for $L^2(\mathbb{R}^2).$

\begin{lemma}
The system $\mathbf{\Phi^{\iota}_\alpha}=\{ \phi^{\alpha, \iota}_{j,l,k}, j \in \mathbb{N}_0,  -  \lceil 2 \cdot 2^{(2-\alpha )j} \rceil \leq l \leq   \lceil 2 \cdot 2^{(2-\alpha )j} \rceil,$ $ k  \in \mathbb{Z}^2  \}$ forms a Parseval frame for $L_2(\mathcal{C}_\iota)^\vee, \iota =\{ \rm{h, v} \}  $ where
$L_2(\mathcal{C}_\iota)^\vee= \{ f \in L_2(\mathbb{R}^2): \supp \hat{f} \subset \mathcal{C}_\iota\}. $
\end{lemma}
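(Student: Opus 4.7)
The plan is to work entirely in the Fourier domain via Plancherel and reduce the claim to two partition-of-unity identities that have already been established, namely the directional identity \eqref{866} and the radial identity \eqref{Pr2_CT3}. For concreteness I describe the case $\iota=\mathrm{v}$; the horizontal case is symmetric under interchange of coordinates.

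First I would fix $j$ and $l$ and rewrite the inner product as a Fourier coefficient. Using Plancherel and the definition of $\hat\phi^{\alpha,\mathrm{v}}_{j,l,k}$, one has
\begin{equation*}
\langle f,\phi^{\alpha,\mathrm{v}}_{j,l,k}\rangle=\int_{\mathbb{R}^2}\hat f(\xi)\,W_j(\xi)\,V_{\mathrm{v}}(\xi^T A^{-j}_{\alpha,\mathrm{v}}S^{-l}_{\mathrm{v}})\,e^{-2\pi i\,\xi^T A^{-j}_{\alpha,\mathrm{v}}S^{-l}_{\mathrm{v}}k}\,d\xi.
\end{equation*}
Making the linear substitution $\eta^T=\xi^T A^{-j}_{\alpha,\mathrm{v}}S^{-l}_{\mathrm{v}}$ (with Jacobian $\det A^j_{\alpha,\mathrm{v}}=2^{(2+\alpha)j}$), the exponential becomes $e^{-2\pi i\,\eta\cdot k}$ and, crucially, the integrand becomes supported in the set $\mathcal{C}^*_{\mathrm{v}}$ from \eqref{868}, which is contained in the fundamental box $[-1/2,1/2]^2$ of the lattice $\mathbb{Z}^2$. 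So $\{\,e^{-2\pi i\,\eta\cdot k}\,\}_{k\in\mathbb{Z}^2}$ is an orthonormal basis of $L^2([-1/2,1/2]^2)$ and Parseval for Fourier series applies.

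Next I would sum over $k$ using that Parseval identity and change variables back to $\xi$. Up to the normalization absorbed in $\phi^{\alpha,\mathrm{v}}_{j,l,k}$, this yields
\begin{equation*}
\sum_{k\in\mathbb{Z}^2}\bigl|\langle f,\phi^{\alpha,\mathrm{v}}_{j,l,k}\rangle\bigr|^2=\int_{\mathbb{R}^2}|\hat f(\xi)|^2\,|W_j(\xi)|^2\,\bigl|V_{\mathrm{v}}(\xi^T A^{-j}_{\alpha,\mathrm{v}}S^{-l}_{\mathrm{v}})\bigr|^2\,d\xi.
\end{equation*}
Then I would sum in $l$: a direct computation gives $V_{\mathrm{v}}(\xi^T A^{-j}_{\alpha,\mathrm{v}}S^{-l}_{\mathrm{v}})=v\bigl(2^{(2-\alpha)j}\xi_1/\xi_2-l\bigr)$, so for $\xi\in\mathcal{C}_{\mathrm{v}}$ we have $|\xi_1/\xi_2|\le 3/2$ and identity \eqref{866} yields $\sum_l |V_{\mathrm{v}}|^2=1$.

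Finally I would sum in $j$. By \eqref{Pr2_CT3} one has $\sum_{j\ge 0}W_j(\xi)^2=1-\hat\Omega(\xi)^2$; since $\supp\hat\Xi\subset[-1/16,1/16]$ while $\mathcal{C}_{\mathrm{v}}$ requires $|\xi_2|\ge 1/8$, the low-pass term $\hat\Omega$ vanishes on $\mathcal{C}_{\mathrm{v}}$, and hence $\sum_j W_j^2=1$ there. Combining everything,
\begin{equation*}
\sum_{j,l,k}\bigl|\langle f,\phi^{\alpha,\mathrm{v}}_{j,l,k}\rangle\bigr|^2=\int_{\mathcal{C}_{\mathrm{v}}}|\hat f(\xi)|^2\,d\xi=\|\hat f\|_2^2=\|f\|_2^2
\end{equation*}
for every $f\in L^2(\mathcal{C}_{\mathrm{v}})^\vee$, which is the Parseval frame identity. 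The only delicate step is the second one: one has to verify carefully that the support condition \eqref{867}--\eqref{868} places the integrand in a fundamental domain of the integer lattice so that Parseval for Fourier series can be invoked exactly (rather than merely giving a frame inequality), and that the Jacobian factors precisely cancel the $j$-dependent normalization in the definition of $\phi^{\alpha,\iota}_{j,l,k}$. Once that bookkeeping is done, the remaining collapses via \eqref{866} and \eqref{Pr2_CT3} are routine.
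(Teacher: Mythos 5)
Your proposal is correct and follows essentially the same route as the paper: verify the pointwise partition of unity $\sum_{j,l}|\hat\phi^{\alpha,\iota}_{j,l,k}|^2=1$ on $\mathcal{C}_\iota$ via \eqref{Pr2_CT3} and \eqref{866} (noting $\hat\Omega$ vanishes there), and then conclude by Parseval for Fourier series using that the rescaled supports lie in $\mathcal{C}^*_\iota\subset[-\tfrac12,\tfrac12]^2$. You merely spell out the "standard arguments" (change of variables, Jacobian versus the $2^{-(2+\alpha)j/2}$ normalization) that the paper leaves implicit.
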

\begin{proof}
We \alex{only}\leave{first} consider $\iota=\rm{h}, $ \alex{since} the other case is done similarly. Indeed, by \eqref{Pr2_CT3} and \eqref{866} we have
\begin{eqnarray}
\sum_{j \in \mathbb{N}_0} \sum_{l=-\lceil  2^{(2-\alpha )j+1} \rceil}^{\lceil  2^{(2-\alpha )j+1} \rceil}  |\hat{\phi}_{j,l,k}^{\alpha, \rm{h}}(\xi)  |^2  &= & \sum_{j \in \mathbb{N}_0} |W_j(\xi)|^2  \sum_{l=-\lceil  2^{(2-\alpha )j+1} \rceil}^{\lceil  2^{(2-\alpha )j+1} \rceil} |v(2^{(2-\alpha)j}\frac{\xi_2}{\xi_1}-l)|^2\nonumber \\
& =& 1, \quad \forall \xi \in \mathcal{C}_{\rm{h}}. \nonumber
\end{eqnarray}
Here we note that low-pass function $\Omega(\xi)=0,\; \forall \xi \in \mathcal{C}_{\mathrm{h}}.$
By using Parseval’s identity and the observation that $\supp \hat{\phi}_{j,l,k}^{\alpha,\rm{h}}(\xi^T S^l_{\rm{h}}A^j_{\alpha, \rm{h}}) \subset \mathcal{C}_{\rm{h}}^* \subset [-\frac{1}{2}, \frac{1}{2}]^2, $ we concludes the claim by standard arguments.
\end{proof}

We will construct a pair of dual shearlet frames by carefully patching together three Parseval frames $\mathbf{\Phi_{\alpha}^\iota}, \iota=\{ \rm{0,h,v}\}$, where the construction of translation-invariant Parseval frame $\mathbf{\Phi^0}$ for $L^2(\mathcal{C}_0)$ is well-known. We first define corresponding cones of the dual 
\begin{equation} 
\mathcal{C}^d_{\rm{h}} := \Big \{ (\xi_1, \xi_2) \in \mathbb{R}^2 :  |\xi_1| \geq \frac{1}{4}, \Big |\frac{\xi_2}{\xi_1} \Big | \leq \frac{4}{3}  \Big \},
\end{equation}
\begin{equation} 
     \mathcal{C}^d_{\rm{v}} := \Big \{ (\xi_1, \xi_2) \in \mathbb{R}^2 :|\xi_2| \geq \frac{1}{4}, \Big  |\frac{\xi_1}{\xi_2} \Big | \leq \frac{4}{3}  \Big \},
\end{equation}
and low frequency part
\begin{equation} 
     \mathcal{C}^d_{\rm{0}} := \Big \{ (\xi_1, \xi_2) \in \mathbb{R}^2 : |\xi_1|, |\xi_2| \leq \frac{2}{3} \}.
\end{equation}
For an illustration, we refer to Figure \ref{1706-1}.
\begin{figure}[H] 
  \centering
  \includegraphics[width=0.5\textwidth]{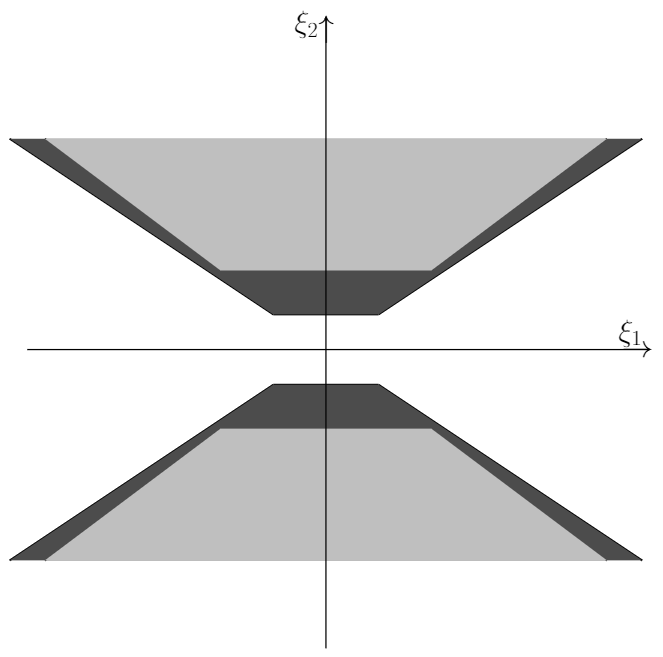}
  \caption{Cones $\mathcal{C}_{\rm{v}}$ (black) and $\mathcal{C}_{\rm{v}}^{d}$ (gray) with $\mathcal{C}_{\rm{v}}^{d}\subset \mathcal{C}_{\rm{v}} $ in frequency domain. \label{1706-1}}
\end{figure}

Next, we choose  $\chi_\iota, \gamma_\iota$ in $C^{\infty}(\mathbb{R}^2),  \iota = \rm{0,h,v}$ by the following lemma.
\begin{lemma} \label{865}
There exist $\chi_\iota, \gamma_\iota$ in $C^{\infty}(\mathbb{R}^2),  \iota = \rm{0,h,v}$ such that  the following properties hold\alex{:}
\begin{enumerate}[label=(\roman*)]
    \item $\supp \chi_{\rm{\iota}}, \gamma_{\rm{\iota}}\subset \mathcal{C}_{\rm{\iota}}, \; \iota =\{\rm{0,h,v} \},$  
     $\sum_{\iota \in \{ \rm{0,h,v}\} } \chi_{\rm{\iota}}(\xi)=1, \; \forall \xi \in \mathbb{R}^2. $ \\
    \item $\chi_{\rm{\iota}}(\xi^TS^{l}_{\iota}A^{j}_{\alpha, \iota}), \gamma_{\rm{\iota}}(\xi^TS^{l}_{\iota}A^{j}_{\alpha, \iota}) \in C^{\infty}(\mathcal{C}^*_\iota) $ with norms 
    $$\|\chi_{\rm{\iota}}((\cdot)^T S^{l}_{\iota}A^{j}_{\alpha, \iota}) \|_{C^N(\mathcal{C}^*_\iota) }, \; \|\gamma_{\rm{\iota}}((\cdot)^TS^{l}_{\iota}A^{j}_{\alpha, \iota}) \|_{C^N(\mathcal{C}^*_\iota) } \leq C_N,$$  where constants $C_N $ are independent of $j$. 
\end{enumerate}
\end{lemma}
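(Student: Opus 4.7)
My plan is to construct the functions explicitly via a separated radial-angular decomposition, and then verify (i) by inspection and (ii) by a chain-rule calculation that relies crucially on the assumption $\alpha \leq 2$.

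For the construction I would pick a smooth bump $\zeta \colon \mathbb{R} \to [0,1]$ supported in $(-3/2, 3/2)$ with $\zeta \equiv 1$ on $[-4/3, 4/3]$, and a smooth radial cut-off $\eta \colon \mathbb{R}^2 \to [0,1]$ with $\supp \eta \subset \mathcal{C}_0$ and $\eta \equiv 1$ on $\mathcal{C}_0^d$. I would then set $\chi_0 = \eta$ and
\[
\chi_{\rm{h}}(\xi) = (1-\eta(\xi))\,\frac{\zeta(\xi_2/\xi_1)}{\zeta(\xi_2/\xi_1) + \zeta(\xi_1/\xi_2)},
\]
with $\chi_{\rm{v}}$ defined symmetrically; outside the origin at least one of $\zeta(\xi_2/\xi_1), \zeta(\xi_1/\xi_2)$ equals $1$, so the denominator is smooth and bounded below by $1$ on $\mathbb{R}^2 \setminus \supp \eta$. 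The $\gamma_\iota$ are constructed identically but using a bump $\tilde\zeta$ with strictly smaller support, so that $\supp \gamma_\iota$ is contained in $\{\chi_\iota = 1\}$. The support and partition-of-unity properties in (i) are then immediate.

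The key computation for (ii) is
\[
\xi^T S_{\rm{h}}^{l} A_{\alpha, \rm{h}}^{j} = \bigl(2^{2j}\xi_1,\; 2^{\alpha j}(\xi_1 l + \xi_2)\bigr),
\]
whose coordinate ratio is $u(\xi) := 2^{(\alpha-2)j}\bigl(l + \xi_2/\xi_1\bigr)$. On $\mathcal{C}_{\rm{h}}^*$ the coordinate $\xi_1$ is bounded away from $0$ and $|\xi_2/\xi_1|\leq 3/2$, while $|l| \leq \lceil 2\cdot 2^{(2-\alpha)j}\rceil$, so $u$ stays in a fixed compact interval independent of $(j,l)$. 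Moreover every partial derivative of $u$ with respect to $\xi$ carries a factor $2^{(\alpha-2)j} \leq 1$ (using $\alpha \leq 2$), multiplied by derivatives of $\xi_2/\xi_1$, which are uniformly bounded on $\mathcal{C}_{\rm{h}}^*$. For $j$ large enough, the first coordinate $2^{2j}\xi_1$ exceeds the support threshold of $\eta$, so $1-\eta(\xi^T S_{\rm{h}}^l A_{\alpha,\rm{h}}^j) \equiv 1$ on $\mathcal{C}_{\rm{h}}^*$ and $\chi_{\rm{h}}(\xi^T S_{\rm{h}}^l A_{\alpha,\rm{h}}^j)$ collapses to a fixed smooth function of the single real variable $u(\xi)$. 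The chain rule then gives a $C^N$ bound independent of $(j,l)$. The vertical case is symmetric, the finitely many small $j$ can be absorbed into the constant, and the case $\iota = \rm{0}$ is trivial since $\chi_0, \gamma_0$ are Schwartz and the matrices reduce to the identity when $j = 0$.

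The one subtlety I anticipate is coordinating the inner threshold of $\eta$ with the image of $\mathcal{C}_{\rm{h}}^*$ under $A_{\alpha,\rm{h}}^j$, so that the radial factor becomes irrelevant at every scale $j \geq 1$; once this alignment is set, everything else is bookkeeping with the chain rule, and the crucial algebraic fact doing the work is that $2^{(\alpha-2)j}$ is a contraction precisely because $\alpha \leq 2$.
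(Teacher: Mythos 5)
Your construction of the $\chi_\iota$ and your argument for part (ii) are sound and run on the same engine as the paper's proof: after the change of variables the only nontrivial dependence on $(j,l)$ sits in the angular ratio $2^{(\alpha-2)j}\bigl(l+\xi_2/\xi_1\bigr)$, which stays in a fixed compact set with uniformly bounded derivatives because $\alpha\le 2$ and $|l|\lesssim 2^{(2-\alpha)j}$, while the radial factor is identically $1$ on $\mathcal{C}^*_\iota$ once $j$ is large. The paper does exactly this, writing $\chi_{\mathrm{h}}(\xi^TS^{l}_{\mathrm{h}}A^{j}_{\alpha,\mathrm{h}})=g_{\mathrm{h}}(2^{2j}\xi_1)\,h_{\mathrm{h}}(2^{-(2-\alpha)j}(\xi_2/\xi_1+l))$, and only needs one extra estimate for $\gamma_{\mathrm{v}}$, whose angular argument has the different form $2^{(2-\alpha)j}\xi_2/(l\xi_1+\xi_2)$.

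The genuine gap is in your $\gamma_\iota$. As literally described they are not well defined: if $\tilde\zeta$ has support small enough that $\supp\gamma_{\mathrm{h}}\subset\{\chi_{\mathrm{h}}=1\}=\{|\xi_2/\xi_1|\le 2/3\}$ (and symmetrically for $\gamma_{\mathrm{v}}$), then on the diagonal directions $|\xi_2/\xi_1|=1$ both $\tilde\zeta(\xi_2/\xi_1)$ and $\tilde\zeta(\xi_1/\xi_2)$ vanish and your denominator is $0$; conversely, keeping $\tilde\zeta$ positive near $\pm1$ to save the denominator forces $\supp\gamma_{\mathrm{h}}$ into the transition region of $\chi_{\mathrm{h}}$, so the inclusion $\supp\gamma_\iota\subset\{\chi_\iota=1\}$ fails. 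More importantly, the property this lemma exists to deliver --- the one the paper's proof spends its four-case analysis on, and the one invoked later to show $\mathbf{\Phi_\alpha^d}\in\mathbb{D}_{\mathbf{\Phi_\alpha}}$ --- is the reproducing identity $\sum_{\iota}\chi_\iota(\xi)\gamma_\iota(\xi)=1$ for all $\xi$. (The printed item (i), $\sum_\iota\chi_\iota=1$, is a misstatement: the paper's own $\chi_{\mathrm{h}}$ and $\chi_{\mathrm{v}}$ both equal $1$ on high-frequency diagonal directions, so they do not sum to $1$; your $\chi$'s satisfy the literal claim but that is not the property used downstream.) Your $\gamma$'s cannot supply this identity: with $\supp\gamma_\iota\subset\{\chi_\iota=1\}$ one has $\chi_\iota\gamma_\iota=\gamma_\iota$, so you would need $\sum_\iota\gamma_\iota=1$, which is impossible since the three sets $\{\chi_\iota=1\}$ miss the sector $2/3<|\xi_2/\xi_1|<3/2$ at high frequency. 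The repair is to abandon the symmetric partition-of-unity ansatz for the duals and design the $\gamma$'s against the $\chi$'s, as the paper does with $\gamma_{\mathrm{h}}=g_{\mathrm{v}}(\xi_1)h_{\mathrm{v}}(\xi_2/\xi_1)$, $\gamma_{\mathrm{v}}=g_{\mathrm{v}}(\xi_2)\bigl[1-h_{\mathrm{v}}(\xi_2/\xi_1)\bigr]$ and $\gamma_0=1-\chi_{\mathrm{h}}\gamma_{\mathrm{h}}-\chi_{\mathrm{v}}\gamma_{\mathrm{v}}$, and then to verify $\chi_{\mathrm{h}}\gamma_{\mathrm{h}}+\chi_{\mathrm{v}}\gamma_{\mathrm{v}}=1$ outside a neighborhood of the origin so that $\gamma_0$ is supported in $\mathcal{C}_0$.
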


\begin{proof}
Let $\chi_0 \in C^{\infty}, \supp \chi =\mathcal{C}_0=\{ \xi \in \mathbb{R}^2: |\xi| \leq 1 \}$ and $\chi_0(\xi)=1, \forall \xi=(\xi_1, \xi_2): |\xi|\leq \frac{2}{3}.  $ We now define
$$ \chi_{\rm{h}}:= g_{\rm{h}}(\xi_1)h_{\rm{h}}(\frac{\xi_2}{\xi_1}), \quad \chi_{\rm{v}}(\xi_1, \xi_2):=\chi_{\rm{h}}(\xi_2, \xi_1)=g_h(\xi_2)h_{\rm{h}}(\frac{\xi_1}{\xi_2}),$$
where $g_{\rm{h}}, h_{\rm{h}}$ are real-valued functions such that
$ g_{\rm{h}}, h_{\rm{h}} \in C^{\infty}(\mathbb{R}),\supp g_{\rm{h}} = [\frac{1}{8}, \infty),  g_{\rm{h}}|_{[\frac{1}{2},\infty)} \equiv 1, $ and $ \supp h_{\rm{h}} \subset [-\frac{3}{2}, \frac{3}{2}], h_{\rm{h}}|_{[-\frac{4}{3}, \frac{4}{3}]} \equiv 1.$

Next, we define
$$ \gamma_{\rm{h}}(\xi):=g_{\rm{v}}(\xi_1)h_{\rm{v}}(\frac{\xi_2}{\xi_1}), \quad \gamma_{\rm{v}}(\xi)=g_{\rm{v}}(\xi_2) \Big [ 1-h_{\rm{v}}(\frac{\xi_2}{\xi_1})  \Big ], $$
where $\supp g_{\rm{v}} = [\frac{1}{4}, \infty), g_{\rm{v}}|_{[\frac{1}{2}, \infty)}\equiv 1,$ and $\supp h_{\rm{v}}=[-\frac{4}{3}, \frac{4}{3}], h_{\rm{v}}|_{[-\frac{3}{4}, \frac{3}{4}]}\equiv 1.$ We now choose  $\gamma_0(\xi)=1-\chi_{\rm{h}}(\xi) \gamma_{\rm{h}}(\xi)-\chi_{\rm{v}}(\xi)\gamma_{\rm{v}}(\xi), \forall \xi \in \mathbb{R}^2.$ We prove that $\chi_\iota, \gamma_\iota$ satisfy desired properties. Indeed,

i)  Obviously, $\supp \chi_\iota \subset \mathcal{C}_\iota, \iota =\{\rm{0,h,v}\}.$ By definition, $\supp \gamma_{\rm{h}} \subset \mathcal{C}_{\rm{h}}^d, $ and $\supp \gamma_{\rm{v}}\subset \mathcal{C}_{\rm{v}}^d$ since $1-h_{\rm{v}}(\frac{\xi_2}{\xi_1}) =0$ for $\xi=(\xi_1, \xi_2): \Big | \frac{\xi_1}{\xi_2}\Big | > \frac{4}{3}.$

Now it remains to show that $ \supp \gamma_0 \subset \mathcal{C}_0$. For this, we need to prove $\chi_{\rm{h}}(\xi)\gamma_{\rm{h}}(\xi)+ \chi_{\rm{v}}(\xi)\gamma_{\rm{v}}(\xi)=1$  for $\xi, |\xi| > \frac{2}{3}. $  Indeed, we first observe
\begin{equation}
    h_{\rm{h}}(\frac{\xi_2}{\xi_1})\cdot h_{\rm{v}}(\frac{\xi_2}{\xi_1}) = h_{\rm{v}}(\frac{\xi_2}{\xi_1}), \; \forall \xi \in \mathbb{R}^2, \label{861}
\end{equation}
and
\begin{equation}
    h_{\rm{h}}(\frac{\xi_1}{\xi_2}) \cdot \Big [ 1-h_{\rm{v}}(\frac{\xi_2}{\xi_1}) \Big ]  = 1-h_{\rm{v}}(\frac{\xi_2}{\xi_1}), \; \forall \xi \in \mathbb{R}^2. \label{862}
\end{equation}
Next we consider four cases

Case 1:  $|\xi_1| \geq \frac{2}{3}, |\xi_2| < \frac{1}{2}.$  Since  $|\xi_1|>\frac{1}{2}, \Big |\frac{\xi_1}{\xi_2} \Big |>\frac{4}{3}, \Big |\frac{\xi_2}{\xi_1} \Big | <\frac{3}{4},$ we obtain $g_{\rm{h}}(\xi_1)=g_{\rm{v}}(\xi_1)=h_{\rm{h}}(\frac{\xi_2}{\xi_1}) =h_{\rm{v}}(\frac{\xi_2}{\xi_1})=1$. Thus, $\chi_{\rm{h}}(\xi)\gamma_{\rm{h}}(\xi)+ \chi_{\rm{v}}(\xi)\gamma_{\rm{v}}(\xi)=1.$

Case 2:  $|\xi_1| \geq \frac{2}{3}, |\xi_2| \geq \frac{1}{2}.$ We have $g_{\rm{h}}(\xi_1) =g_{\rm{v}}(\xi_1)=g_{\rm{h}}(\xi_2) =g_{\rm{v}}(\xi_2)=1.$ This implies
\begin{eqnarray}
\chi_{\rm{h}}(\xi)\gamma_{\rm{h}}(\xi)+ \chi_{\rm{v}}(\xi)\gamma_{\rm{v}}(\xi) &=& h_{\rm{h}}(\frac{\xi_2}{\xi_1}) h_{\rm{v}}(\frac{\xi_2}{\xi_1}) + h_{\rm{h}}(\frac{\xi_1}{\xi_2})\Big [ 1- h_{\rm{v}}(\frac{\xi_2}{\xi_1})\Big ]\nonumber \\
& \stackrel{\mathclap{\normalfont{ \eqref{861}+\eqref{862}}}} = & \quad \; h_{\rm{v}}(\frac{\xi_2}{\xi_1}) + 1 - h_{\rm{v}}(\frac{\xi_2}{\xi_1}) =1.\nonumber
\end{eqnarray}

Case 3:  $|\xi_2| \geq \frac{2}{3}, |\xi_1| < \frac{1}{2}.$ Since $|\xi_2| \geq \frac{1}{2}, \Big |\frac{\xi_2}{\xi_1}\Big | > \frac{4}{3}, \Big | \frac{\xi_1}{\xi_2}\Big | < \frac{3}{4},$ we obtain $g_{\rm{h}}(\xi_2) =g_{\rm{v}}(\xi_2)=1, h_{\rm{h}}(\frac{\xi_1}{\xi_2})=1, h_{\rm{v}}(\frac{\xi_2}{\xi_1})=0.$ Thus,
$\chi_{\rm{h}}(\xi)\gamma_{\rm{h}}(\xi)+ \chi_{\rm{v}}(\xi)\gamma_{\rm{v}}(\xi)=0+1=1.$

Case 4:  $|\xi_2| \geq \frac{2}{3}, |\xi_1| \geq \frac{1}{2}.$ Since  $g_{\rm{h}}(\xi_1) =g_{\rm{v}}(\xi_1)=g_{\rm{h}}(\xi_2) =g_{\rm{v}}(\xi_2)=1$, we derive
\begin{eqnarray}
\chi_{\rm{h}}(\xi)\gamma_{\rm{h}}(\xi)+ \chi_{\rm{v}}(\xi)\gamma_{\rm{v}}(\xi) &=& h_{\rm{h}}(\frac{\xi_2}{\xi_1}) h_{\rm{v}}(\frac{\xi_2}{\xi_1}) + h_{\rm{h}}(\frac{\xi_1}{\xi_2})\Big [ 1- h_{\rm{v}}(\frac{\xi_2}{\xi_1})\Big ]\nonumber \\
& \stackrel{\mathclap{\normalfont{ \eqref{861}+\eqref{862}}}} = & \qquad  h_{\rm{v}}(\frac{\xi_2}{\xi_1}) + 1 - h_{\rm{v}}(\frac{\xi_2}{\xi_1}) =1.\nonumber
\end{eqnarray}
Thus, we obtain $\supp \gamma_0 \subset \mathcal{C}_0^d.$

 On the other hand, by definition we have $\gamma_0(\xi) + \chi_{\rm{h}}(\xi)\gamma_{\rm{h}}(\xi)+ \chi_{\rm{v}}(\xi) \gamma_{\rm{v}}(\xi)=1, \forall \xi \in \mathbb{R}^2.$ In addition, $\supp \gamma_0 \subset \mathcal{C}^d_0=\{\xi \in \mathbb{R}^2: |\xi| \leq \frac{2}{3} \}$ and 
$\chi_0 \equiv 1, \forall \xi \in \mathcal{C}_0^d,$ we obtain $\chi_0(\xi) \gamma_0(\xi) = \gamma_0(\xi), \; \forall \xi \in \mathbb{R}^2.$ Thus, $\sum_{\iota \in \rm{0,h,v}} \chi_\iota(\xi) \gamma_\iota(\xi) =1. $ This concludes the claim.

iii)  First we consider  \begin{eqnarray}
\chi_{\rm{h}}(\xi^TS^{l}_{\rm{h}}A^{j}_{\alpha, \rm{h}})&=&\chi_{\rm{h}}(2^{2j}\xi_1, 2^{\alpha j }(\xi_2 + l\xi_1))  \nonumber \\
&=& g_{\rm{h}}(2^{2j}\xi_1) h_{\rm{h}}(2^{-(2-\alpha)j}(\frac{\xi_2}{\xi_1}+l)). \nonumber
\end{eqnarray}
Since  we have $g_{\rm{h}}(2^{2j}\xi_1) \equiv 1, \forall \xi \in \mathcal{C}_{\rm{h}}^*,$ for $j \geq 2.$
Obviously, $\chi_{\rm{h}}(\xi^TS^{l}_{\rm{h}}A^{j}_{\alpha, \rm{h}}) \in C^{\infty}(\mathcal{C}^*_{\rm{h}}) $ with
$\|\chi_{\rm{h}}((\cdot)^T S^{l}_{\rm{h}}A^{j}_{\alpha, \rm{h}}) \|_{C^N(\mathcal{C}^*_{\rm{h}}) } \leq C_N.$ The proof works analogously to case of 
$\chi_{\rm{v}}(\xi^TS^{l}_{\rm{v}}A^{j}_{\alpha, \rm{v}}), \gamma_{\rm{h}}(\xi^TS^{l}_{\rm{h}}A^{j}_{\alpha, \rm{h}}).$
It remains to prove for $\gamma_{\rm{v}}(\xi^TS^{l}_{\rm{v}}A^{j}_{\alpha, \rm{v}}),$ we have
\begin{eqnarray*}
\gamma_{\rm{v}}(\xi^TS^{l}_{\rm{v}}A^{j}_{\alpha, \rm{v}}) &=& g_{\rm{v}}(2^{2j}\xi_2)\Big [ 1-h_{\rm{v}}(\frac{2^{2j}\xi_2}{2^{\alpha j}(l\xi_1 + \xi_2})\Big ].
\end{eqnarray*}
The observation $\supp h_{\rm{v}}=[-\frac{4}{3},\frac{4}{3}]$  implies $h_{\rm{v}}(\frac{2^{2j}\xi_2}{2^{\alpha j}(l\xi_1 + \xi_2})=0, $ for $\Big | \frac{2^{2j}\xi_2}{2^{\alpha j}(l\xi_1 + \xi_2} \Big | > \frac{4}{3}$. In the other words, for $j \geq 2, \gamma_{\rm{v}}(\xi^TS^{l}_{\rm{v}}A^{j}_{\alpha, \rm{v}}) \neq 1$ only for $l \gtrsim 2^{(2-\alpha)j}.$ By a direct computation, for $j \geq 2, l \gtrsim 2^{(2-\alpha)j},$ we obtain
$G(\xi):=2^{(2-\alpha)j} \frac{\xi_2}{l \xi_1 + \xi_2} \in C^{\infty}(\mathcal{C}^*_{\rm{v}})$ and $\Big | \frac{\partial^N }{\partial \xi_1^N} G(\xi)\Big |, \Big | \frac{\partial^N  }{\partial \xi_2^N}G(\xi) \Big | \leq C_N$, where $C_N$ are constants independent of $j.$
This finishes the proof.
\end{proof}

Let us now go back  to the starting point of constructing a dual pair of shearlet types. 
 We define two representation systems of bandlimited $\alpha$-shearlet associated with those functions in Lemma \ref{865} by
 $$\mathbf{\Phi_\alpha} = \Big \{ \check{\chi}_\iota* \phi^{\alpha,\rm{\iota}}_{j,l,k} (x), \iota= \{\rm{h,v} \}, j \in \mathbb{N}_0, - \lceil 2 \cdot 2^{(2-\alpha )j} \rceil \leq l \leq  \lceil 2 \cdot 2^{(2-\alpha )j} \rceil, k \in \mathbb{Z}^2   \Big \} $$
 $\qquad  \bigcup \{ \check{\chi}_0*\phi^0_k(x), k \in \mathbb{Z}^2\},$ \\
and
$$ \mathbf{\Phi^{d}_{\alpha}}= \Big \{\check{\gamma}_\iota* \phi^{\alpha, \iota}_{j,l,k} (x), \iota= \{ \rm{h, v} \}, j \in \mathbb{N}_0, - \lceil 2 \cdot 2^{(2-\alpha )j} \rceil \leq l \leq  \lceil 2 \cdot 2^{(2-\alpha )j} \rceil, k \in \mathbb{Z}^2   \Big \}.$$
 $\qquad  \bigcup \{ \check{\gamma}_0*\phi^0_k(x), k \in \mathbb{Z}^2\},$ \\
where $\mathbf{\Phi^0} = \{ \phi^0_k(x)\}_{k \in \mathbb{Z}^2}$ forms a Parseval frame for $L^2(\mathcal{C}_0)$.

As mentioned before, these two systems are a natural extension of shearlets by using flexible scaling to accommodate the smoothness of the data. They consist of a
countable collection of well-local\leave{l}ized shearlet elements at various locations, scales, and orientations. In addition, we combine two Parseval frames by using $\chi_{\iota}, \gamma_{\iota}$ instead of $\gamma_{\iota}, \frac{\chi_{\iota}}{\gamma_{\iota}}$ used in \cite{21} which might lead to bad behavior when we consider higher derivatives of shearlet elements. 

For convenience, let us use the following index set of shearlets
\begin{eqnarray} 
\Delta &:=& \Big \{(j,l,k, \rm{\iota}) \mid j \geq 0, l \in \mathbb{Z}, |l| \leq \lceil 2\cdot  2^{(2-\alpha )j} \rceil , k \in \mathbb{Z}^2, \rm{\iota} \in \{ \rm{h}, \rm{v} \}  \Big \}.  \label{2008-1}
\end{eqnarray} 
\begin{lemma} The systems $\mathbf{\Phi_\alpha}, \mathbf{\Phi_\alpha^d}$ form a pair of shearlet frames  for $L^2(\mathbb{R}^2)$. Consequently, we have 
$\mathbf{\Phi_\alpha^d} \in \mathbb{D}_{\mathbf{\Phi_\alpha}}.$
\end{lemma}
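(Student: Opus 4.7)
The plan is to work entirely in the Fourier domain, reduce each cone-contribution to the Parseval frame of the preceding lemma, and then glue the three pieces together using the partition identities from Lemma \ref{865}. The payoff is that the duality will follow from the identity $\sum_{\iota} \chi_\iota \gamma_\iota \equiv 1$ (which was actually established inside the proof of Lemma \ref{865}, even though only $\sum_\iota \chi_\iota \equiv 1$ is stated as part (i)), and the frame bounds will follow from $L^\infty$-boundedness of $\chi_\iota, \gamma_\iota$ together with the elementary inequality $|a+b+c|^2 \le 3(|a|^2+|b|^2+|c|^2)$.

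First, using Plancherel and the fact that $\chi_\iota$ is real-valued, I rewrite
\begin{equation*}
\bigl\langle f,\, \check{\chi}_\iota * \phi^{\alpha,\iota}_{j,l,k}\bigr\rangle = \bigl\langle \chi_\iota \hat{f},\, \hat{\phi}^{\alpha,\iota}_{j,l,k}\bigr\rangle,
\end{equation*}
and similarly for $\gamma_\iota$. Since $\supp(\chi_\iota \hat f) \subset \mathcal{C}_\iota$, the preceding lemma (Parseval frame property of $\mathbf{\Phi_\alpha^\iota}$ on $L^2(\mathcal{C}_\iota)^\vee$, and the analogous statement for $\iota=0$) yields
\begin{equation*}
\sum_{j,l,k}\bigl|\langle f, \check{\chi}_\iota * \phi^{\alpha,\iota}_{j,l,k}\rangle\bigr|^2 = \|\chi_\iota \hat f\|_2^2, \qquad \sum_{j,l,k}\bigl|\langle f, \check{\gamma}_\iota * \phi^{\alpha,\iota}_{j,l,k}\rangle\bigr|^2 = \|\gamma_\iota \hat f\|_2^2.
\end{equation*}

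Summing over $\iota \in \{0,\mathrm{h},\mathrm{v}\}$, the upper frame bound for $\mathbf{\Phi_\alpha}$ follows from $\sum_\iota \|\chi_\iota\|_\infty^2 < \infty$, while the lower bound comes from $|\hat f|^2 = |\sum_\iota \chi_\iota \hat f|^2 \le 3\sum_\iota|\chi_\iota \hat f|^2$, giving a frame constant $A \ge 1/3$. For $\mathbf{\Phi_\alpha^d}$, the upper bound is identical, and the lower bound uses $\sum_\iota \chi_\iota\gamma_\iota \equiv 1$ to write $\|\hat f\|_2^2 \le 3(\max_\iota \|\chi_\iota\|_\infty^2)\sum_\iota \|\gamma_\iota \hat f\|_2^2$. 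Thus both systems are frames.

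Finally, to verify the duality, I compute the Fourier transform of $\sum_{j,l,k,\iota}\langle f, \check{\chi}_\iota * \phi^{\alpha,\iota}_{j,l,k}\rangle\,\check{\gamma}_\iota * \phi^{\alpha,\iota}_{j,l,k}$: for each fixed $\iota$ it equals
\begin{equation*}
\gamma_\iota(\xi) \sum_{j,l,k}\bigl\langle \chi_\iota \hat f,\, \hat{\phi}^{\alpha,\iota}_{j,l,k}\bigr\rangle \hat{\phi}^{\alpha,\iota}_{j,l,k}(\xi) = \gamma_\iota(\xi)\,\chi_\iota(\xi)\,\hat f(\xi)
\end{equation*}
by the Parseval reconstruction on $L^2(\mathcal{C}_\iota)^\vee$. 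Summing over $\iota$ and invoking $\sum_\iota \chi_\iota\gamma_\iota \equiv 1$ recovers $\hat f$, so $f = \mathbf{\Phi_\alpha^d}\,\mathbf{\Phi_\alpha}^* f$, i.e., $\mathbf{\Phi_\alpha^d} \in \mathbb{D}_{\mathbf{\Phi_\alpha}}$.

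The only step that requires any care is the partition identity $\sum_\iota \chi_\iota \gamma_\iota \equiv 1$, which is not in the statement of Lemma \ref{865} but is obtained inside its proof; I would briefly cite the four-case argument there. Everything else is routine Plancherel/Fubini bookkeeping, with the low-frequency piece $\iota = 0$ handled identically to $\iota \in \{\mathrm{h},\mathrm{v}\}$.
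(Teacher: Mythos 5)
Your proof is correct and follows essentially the same route as the paper's: reduce to the cone-wise Parseval frames via Plancherel, get the upper bound from $\sum_\iota\|\chi_\iota\|_\infty^2$, the lower bounds from the partition identity together with $|a+b+c|^2\le 3(|a|^2+|b|^2+|c|^2)$, and the duality from $\sum_\iota\chi_\iota\gamma_\iota\hat f=\hat f$. Your remark that the identity $\sum_\iota\chi_\iota\gamma_\iota\equiv 1$ is only established inside the proof of Lemma \ref{865} (part (i) of its statement records $\sum_\iota\chi_\iota\equiv 1$ instead, which the construction does not actually deliver) is a fair and worthwhile observation; note only that your lower frame bound for $\mathbf{\Phi_\alpha}$ leans on that stated but unproven identity, so it is safer to argue as the paper does, via $\|\hat f\|_2^2=\|\sum_\iota\chi_\iota\gamma_\iota\hat f\|_2^2\le 3\sup_\iota\|\gamma_\iota\|_\infty^2\sum_\iota\|\chi_\iota\hat f\|_2^2$.
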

\begin{proof}
First we prove that $\mathbf{\Phi_\alpha}$ forms a frame for $L^2(\mathbb{R}^2).$ Indeed, the observation $\supp \chi_\iota \in \mathcal{C}_{\iota}, \iota=\{\rm{0,h,v} \}$ implies $\supp \chi_\iota \hat{f} \in \mathcal{C}_\iota, \iota=\{ \rm{0,h,v}\}.$ 
Now we exploit that $\mathbf{\Phi^0}$ and $\mathbf{\Phi_\alpha^\iota}=\{ \phi^{\alpha,\iota}_{j,l,k}, j\in \mathbb{N}_0,  -  \lceil 2 \cdot 2^{(2-\alpha )j} \rceil \leq l \leq  \lceil 2 \cdot 2^{(2-\alpha )j} \rceil, k \in \mathbb{Z}^2  \}, \iota=\{\rm{ h,v} \}$ constitute Parseval frames for $\mathcal{C}_\iota, \iota=\{ \rm{0,h,v} \}$ \alex{and} obtain
\begin{eqnarray}
\sum_{\substack{ {\{ (j,l,k,\iota) \in \Delta} \} \\ \cup \{\iota=0, k \in \mathbb{Z}^2 \} }}| \langle  f, \Phi_\alpha \rangle |^2
&=&
\sum_{(j,l,k,\iota) \in \Delta} | \langle  \hat{f}, \chi_\iota \hat{\phi}^{\alpha, \iota}_{j,l,k} \rangle |^2 + \sum_{k \in \mathbb{Z}^2} |\langle \hat{f}, \chi_0 \hat{\phi}^0_k \rangle |^2 \nonumber \\
&=& \sum_{(j,l,k,\iota) \in \Delta} | \langle \chi_\iota \hat{f}, \hat{\phi}^{\alpha, \iota}_{j,l,k}  \rangle |^2 + \sum_{k \in \mathbb{Z}^2} |\langle \chi_0 \hat{f},  \hat{\phi}^0_k \rangle|^2 \nonumber \\
&=&\sum_{\iota \in \{ \rm{0,h,v} \}}\|\chi_\iota \hat{f} \|_2^2 \leq \sum_{\iota \in \{ 0,h,v\} } \|\chi_\iota \|_\infty^2 \cdot  \|f \|^2_2. \label{461}
\end{eqnarray}
In addition, 
\begin{eqnarray}
3\sup_{\iota \in \{ \rm{0,h,v} \}} \| \gamma_\iota \|_\infty^2 \cdot \sum_{\substack{ {\{(j,l,k,\iota) \in \Delta}\} \\ \cup \{\iota=0, k \in \mathbb{Z}^2 \} }} | \langle  f, \Phi_\alpha \rangle |^2
&=& 3\sup_{\iota \in \{ \rm{0,h,v} \}} \| \gamma_\iota \|_\infty^2 \cdot \sum_{\iota \in \{ \rm{0,h,v} \}}\|\chi_\iota \hat{f} \|_2^2  \nonumber \\
&\geq &  \| \sum_{\iota \in \{ 0,h,v\} } \chi_\iota \gamma_\iota \hat{f} \|_2^2 =\|f\|_2^2. \label{462}
\end{eqnarray}
Combine \eqref{461} and \eqref{462} we obtain
\begin{equation}
    \frac{1}{3\sup_{\iota \in \{ \rm{0,h,v} \}} \| \gamma_\iota \|_\infty^2 } \cdot \|f \|_2^2 \leq  \sum_{\substack{ {\{(j,l,k,\iota) \in \Delta} \} \\ \cup \{\iota=0, k \in \mathbb{Z}^2 \} }} | \langle  f, \Phi_\alpha \rangle |^2
\leq 
  \sum_{\iota \in \{ 0,h,v\} } \|\chi_\iota \|_\infty^2  \cdot \|f\|_2^2. \label{463}
\end{equation}
Similarly, we have
\begin{equation}
    \frac{1}{3\sup_{\iota \in \{ \rm{0,h,v} \}} \| \chi_\iota \|_\infty^2 } \cdot \|f \|_2^2 \leq  \sum_{\substack{ {\{(j,l,k,\iota) \in \Delta}\} \\ \cup \{\iota=0, k \in \mathbb{Z}^2 \} }} | \langle  f, \Phi^{d}_\alpha \rangle |^2
\leq 
  \sum_{\iota \in \{ 0,h,v\} } \|\gamma_\iota \|_\infty^2  \cdot \|f\|_2^2. \label{464}
\end{equation}
It remains to show that $\mathbf{\Phi_\alpha}, \mathbf{\Phi_\alpha^d}$ form a pair of dual frame\alex{s}. \alex{Due to} Plancherel's theorem and $\mathbf{\Phi^0}, \mathbf{\Phi_\alpha^\iota}, \iota=\{ \rm{h,v}\}$ \alex{being}\leave{are} Parseval frames for $L^2(\mathcal{C}_\iota)^\vee, \iota=\{ \rm{0,h,v}\}$, we have
\begin{eqnarray*}
\sum_{\substack{ {\{(j,l,k,\iota) \in \Delta} \}\\ \cup \{\iota=0, k \in \mathbb{Z}^2 \} }} \langle f, \Phi_\alpha \rangle \hat{\Phi}_\alpha^d &= &\sum_{(j,l,k,\iota) \in \Delta}\langle \hat{f}, \chi_\iota \hat{\phi}^{\alpha, \iota}_{j,l,k} \rangle \gamma_\iota  \hat{\phi}^{\alpha, \iota}_{j,l,k} + \sum_{k \in \mathbb{Z}^2} \langle \hat{f}, \chi_0 \phi^0_k \rangle \gamma_0 \phi^0_k \\
& = & \sum_{(j,l,k,\iota) \in \Delta} \langle \chi_\iota \gamma_\iota \hat{f},  \hat{\phi}^{\alpha, \iota}_{j,l,k} \rangle  \hat{\phi}^{\alpha, \iota}_{j,l,k}  + \sum_{k \in \mathbb{Z}^2} \langle \chi_0 \gamma_0  \hat{f},  \phi^0_k \rangle \phi^0_k \\
&=& \sum_{\iota \in \{ \rm{0, h,v} \}} \chi_\iota \gamma_\iota 
\hat{f} =\hat{f}.
\end{eqnarray*}
\alex{This finishes the proof.}
\leave{This concludes the claim.}
\end{proof}

Shearlets have been studied extensively so far due to their highly directional performance and smooth digital grid, whereas wavelets are best adapted to anisotropic features like point singularities. Bandlimited $\alpha$-shearlets  allow for a unified \alex{treatment of wavelets and shearlets}\leave{view from wavelets to shearlets}. If $\alpha=1$ we  obtain the bandlimited shearlet frame by using parabolic scaling. 
 Different from the literature \cite{21}, we here rescaled the parameter $j$ to $2j$. Therefore, the spatial footprints of shearlets are
of size $2^{-j}$ times $2^{-2j}$ instead of $2^{-j/2}$ times $2^{-j}$.
Also, if $\alpha$ approaches 2 the elements of $\alpha$-shearlets scale in an isotropic fashion. Thus, $\mathbf{\Phi_\alpha}=\{ \phi_{j,l,k}^{\alpha, \iota} \}_{(j,l,k,\iota) \in \Delta} \cup \{ \phi^0_k\}_{k \in \mathbb{Z}^2}$ can be viewed as a special \alex{instance}\leave{case} of wavelets. Consider the shearlet system $\mathbf{\Phi_{\alpha}} $ the tiling of the
frequency domain on the vertical cone $\mathcal{C}_{\rm{v}}$ is illustrated in Figure \ref{1706-2}.

\begin{figure}[H] \label{1706-2}
  \centering
  \includegraphics[width=0.55\textwidth]{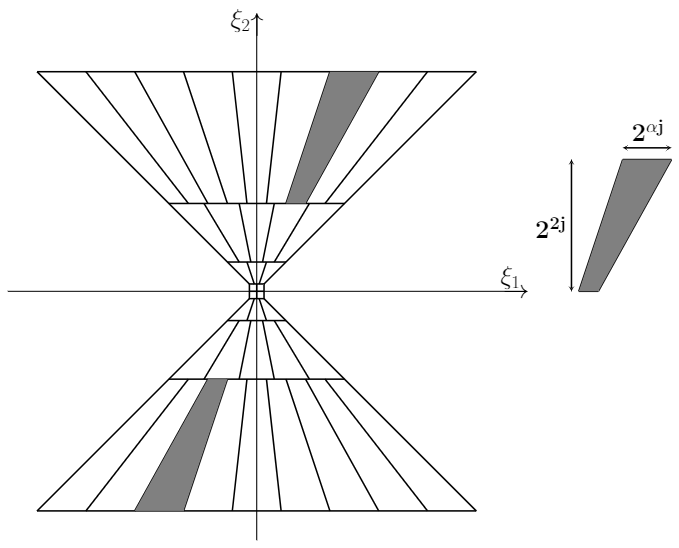}

  \caption{The frequency tilling of the vertical cone $\mathcal{C}_{\rm{v}},$ with the  support of one shearlet highlighted.}
\end{figure}

The reader should be aware of the fact that we can construct a system satisfying a milder condition \eqref{1306-3}, i.e., an element of $\mathbb{D}_{\mathbf{\Phi}}$, instead of forming a dual. In fact, if there exists a well-localized synthesis pseudo-dual the success of the proposed algorithm is guaranteed. 

\section{Multi-scale component separation} \label{1406-5}
For $\alpha \in [1, 2)$, we first fix a constant $\epsilon$ such that
\begin{equation}
    0< \epsilon < \frac{2-\alpha }{4}. \label{468}
\end{equation}
Using the window function $W_j$ by  \eqref{11061}, we define a class of \emph{frequency filters} $F_j$ defined by its Fourier transform 
\begin{equation} 
   \hat{F}_j(\xi):=W_j(\xi) = W(\xi/2^{2j}), \quad \forall j \geq 1, \xi \in \mathbb{R}^2, 
\end{equation}
and in low frequency part
$$ \hat{F}_{low}(\xi):= \Omega(\xi), \quad \xi \in \mathbb{R}^2,$$ 
where $\Omega$ is defined in \eqref{2108-5}.
Using these filters to decompose $f$ into sub-images by $f_j = F_j* f,$
the original image $f$ is then recover\alex{e}d from its pieces $f_j$ by \eqref{Pr2_CT3}
\begin{equation} \label{2606-1}
   f = F_{low}*F_{low}*f+\sum_{j \in \mathbb{N}_0} F_j * f_j.
\end{equation} 
We intend to apply Algorithm 1 for each sub-image $f_j$ which is assumed to be $\mathcal{P}_j + w \mathcal{L}_j$, where $\mathcal{P}_j = F_j* \mathcal{P}, w \mathcal{L}_j=F_j*w\mathcal{L}$. The whole signal $f$ is then reconstructed by \eqref{2606-1}.

These filters allow us to consider the pointlike and curvelike part at different scales.
The separation problem is meaningful at each scale if the components are comparable in sense of their energy.  A main issue is the balance of the energy of each component \cite{12,23}, i.e., we may assume that $ \{ \gamma_i\}_{i=1}^K, \{ c_i \}_{i=1}^K $ are chosen satisfying $\| \mathcal{P}_j \|_2  \approx \| \mathcal{C}_j \|_2$ at each scale $j$. In that case, we can choose $\max\limits_{i=1, 2, \cdots, K}\{ \lambda_i \} =\frac{3}{2}.$
However, we will \leave{prove in general case and do} not \alex{restrict to this}\leave{assume this} energy balancing condition in our analysis. We later show that the success of the proposed algorithm is guaranteed as long as the energy of components are not too small, i.e., they satisfy a  lower bound at each scale. This guarantees the performance even in case of different energies.

We would like to emphasize that  there are very few elements interacting significantly with the discontinuities, i.e., few elements of the wavelet and shearlet expansions are enough to provide accurate approximations of \alex{the} components. This allows us to define the following clusters which will be then exploited to derive small error approximation as well as the cluster coherence \alex{of}
\begin{equation}
    \Lambda_{1j}=\Big \{(j^{\prime}, m) \mid j^{\prime} \in \mathbb{Z}, |j^{\prime} -j| \leq 1, m=(m_1, m_2) \in \mathbb{Z}^2, \sqrt{m_1^2+ m_2^2} \leq 2^{\epsilon j} \Big \},
\end{equation}
\alex{and}
\begin{equation}
    \Lambda_{2j}=\Big \{(j^{\prime},l,k, \rm{v}) \in \Delta \mid j^{\prime} \in \mathbb{Z}, |j^{\prime}-j| \leq 1,  k=(k_1,k_2) \in \mathbb{Z}^2, |k_2-lk_1| \leq 2^{\epsilon j} \Big \},
\end{equation}
where $\Delta$ is defined in \eqref{2008-1}.
 \textcolor{black}{\leave{For an intuition}\alex{Intuitively}, the geometry underlying these two components should be \leave{relatively }different so that a chosen frame pair can provide sufficient small cluster sparsity and cluster coherence, which guarantee the success of Algorithm 1. Indeed, although points and curves can \leave{be }overlap\leave{ped} in spatial domain, \leave{but }their wavefront set \alex{might be}\leave{are} quite different in phase space consisting of a pair of positions and directions. Here the wavefront set of a distribution $f$ is the set of positions and orientations at which $f$ is not smooth. This difference intuitively enables us to separate the \alex{components} by microlocal analysis,  see \cite{12} for \alex{a more comprehensive discussion.}\leave{such microlocal viewpoint.}}

We now present our main separation result.
\begin{Th} \label{1006-18}
Consider a wavelet frame $\mathbf{\Psi }= \{ \psi_{j,m}\}_{(j,m)} \cup \{\Omega(\cdot-m) \}, j \in \mathbb{N}_0, m \in \mathbb{Z}^2 $ and a bandlimited $\alpha$-shearlet frame $\mathbf{\Phi_\alpha}=\{ \phi_{j,l,k}^{\alpha, \iota}\}_{(j,l,k, \iota) \in \Delta} \cup \{ \phi^0_k\}_{k \in \mathbb{Z}^2}, \alpha \in [1,2)$ . Then \leave{asymptotic }separation based on scale $j$ is achieved by \eqref{961} \alex{in the limit of large $j$}. Namely, we have
\begin{equation} 
 \frac{\| \mathcal{P}_j^\star - \mathcal{P}_j  \|_2 }{\| \mathcal{P}_j \|_2 } + \frac{\| \mathcal{C}_j^\star - w\mathcal{L}_j  \|_2 }{\| w\mathcal{L}_j \|_2 } = o(2^{-Nj}) \rightarrow 0, \quad j \rightarrow \infty,  \label{EQ201-chap3}
\end{equation}
for all $N \geq 0$, where $(\mathcal{P}_j^\star,\mathcal{C}_j^\star) $ is the solution of \eqref{961} and $(\mathcal{C}_j, \mathcal{T}_{s,j})$ are purported components. 
\end{Th}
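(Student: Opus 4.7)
The plan is to apply Theorem \ref{1206-5} at each scale $j$ to the sub-image decomposition $f_j = \mathcal{P}_j + w\mathcal{L}_j$ with $\Phi_1 = \mathbf{\Psi}$ and $\Phi_2 = \mathbf{\Phi_\alpha}$, using $\Phi_2^d = \mathbf{\Phi_\alpha^d}$ from the dual construction and the clusters $\Lambda_{1j}, \Lambda_{2j}$ just defined. The theorem then produces an absolute error bound
$$ \|\mathcal{P}_j^\star - \mathcal{P}_j\|_2 + \|\mathcal{C}_j^\star - w\mathcal{L}_j\|_2 \ \leq\ \frac{2\max\{B_1,B_2\}\,(\delta_{1j}+\delta_{2j})}{1 - 2\mu_c(\Lambda_{1j},\Lambda_{2j})}, $$
so the task reduces to three quantitative estimates: (i) relative sparsity bounds $\delta_{1j},\delta_{2j}$ decaying faster than any negative power of $2^j$; (ii) a cluster coherence estimate $\mu_c(\Lambda_{1j},\Lambda_{2j}) \to 0$ as $j\to\infty$; and (iii) polynomial lower bounds on $\|\mathcal{P}_j\|_2$ and $\|w\mathcal{L}_j\|_2$ so that the normalized ratio in \eqref{EQ201-chap3} still decays super-polynomially.

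For step (i), the cluster sparsity of $\mathcal{P}_j$ in $\mathbf{\Psi}$ follows from the Schwartz decay of radial wavelets: the band-limited structure of $W_{j'}$ kills coefficients with $|j'-j|>1$, and for indices at scale comparable to $j$ with $|m|>2^{\epsilon j}$, the wavelet is spatially centered far from the fixed point singularities $x_i$, yielding rapid decay of $\langle\mathcal{P}_j,\psi_{j',m}\rangle$ via repeated integration by parts against the model \eqref{EQ60-chap3}. For $\mathcal{C}_j = w\mathcal{L}_j$ in the shearlet system, the analogous estimate uses that only shearlets with orientation aligned to the line (i.e., $\iota = \mathrm{v}$ and $l \approx 0$) and spatial coordinates with $|k_2 - l k_1|$ small intersect the singularity significantly; the remaining inner products decay super-polynomially by the line-singularity analysis standard in the bandlimited shearlet literature. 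Summing over $\Lambda_{1j}^c, \Lambda_{2j}^c$ yields $\delta_{1j},\delta_{2j} = o(2^{-Nj})$ for every $N$.

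Step (ii) is the main obstacle. By Lemma \ref{1206-6} it suffices to bound both $\mu_c(\Lambda_{1j}, \mathbf{\Psi}; \mathbf{\Phi_\alpha^d})$ and $\mu_c(\Lambda_{2j}, \mathbf{\Phi_\alpha}; \mathbf{\Psi})$. Each inner product $\langle \psi_{j',m}, \check\gamma_\iota * \phi^{\alpha,\iota}_{j'',l,k}\rangle$ is computed in the frequency domain via Plancherel; the product of the wavelet's corona $\mathcal{A}_{j'}$ and the shearlet's trapezoidal tile \eqref{96100}--\eqref{96101} restricts the relevant indices to $|j''-j'|\le 1$, and on this common support one integrates by parts in the spatial displacement variable, invoking the uniform $C^N$ bounds on $\chi_\iota,\gamma_\iota$ in the rescaled coordinates from Lemma \ref{865}. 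This gives rapid spatial decay in $|m - \text{position}(k)|$. The cardinalities of $\Lambda_{1j}$ and $\Lambda_{2j}$ scale polynomially in $2^{\epsilon j}$ and $2^{(2-\alpha)j + 2\epsilon j}$ respectively, and must be beaten by the per-term decay. Here the hypothesis $\alpha < 2$ becomes essential: the number of effective shearing parameters grows like $2^{(2-\alpha)j}$, while the angular localization of a shearlet has width $\sim 2^{-(2-\alpha)j}$. The choice $\epsilon < (2-\alpha)/4$ in \eqref{468} ensures the combinatorial growth of the clusters is strictly dominated by the inner-product decay, so $\mu_c(\Lambda_{1j},\Lambda_{2j}) = o(2^{-Nj})$; if $\alpha = 2$, shearlets degenerate to isotropic wavelets and these sums no longer vanish, explaining the sharpness of the range.

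Finally for step (iii), direct computation from the Fourier side using \eqref{EQ60-chap3} gives $\|\mathcal{P}_j\|_2 \sim 2^{(2-\max\lambda_i)j}$ up to constants, and the stationary-phase / coarea analysis of the line distribution \eqref{EQ61-chap3} filtered by $W_j$ gives $\|w\mathcal{L}_j\|_2 \sim 2^{j/2}$, both polynomial. Dividing the super-polynomially small numerator produced by steps (i)--(ii) by these polynomial denominators preserves $o(2^{-Nj})$ decay for every $N$, completing the proof.
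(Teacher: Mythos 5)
Your proposal follows essentially the same route as the paper: apply Theorem \ref{1206-5} at each scale with the clusters $\Lambda_{1j},\Lambda_{2j}$, show the cluster sparsities $\delta_{1j},\delta_{2j}$ decay faster than any negative power of $2^{j}$ (Propositions \ref{1606-4}, \ref{1606-5}), show the cluster coherences vanish asymptotically (Propositions \ref{1606-2}, \ref{1606-3}), and divide by polynomial lower bounds on $\|\mathcal{P}_j\|_2$ and $\|w\mathcal{L}_j\|_2$. One small correction: the coherence is only $O(2^{-(2-\alpha-4\epsilon)j/2})$, a fixed polynomial rate, not $o(2^{-Nj})$ for all $N$ as you assert --- but this is harmless, since all that is needed is that $1-2\mu_c$ stays bounded away from zero, the super-polynomial decay coming entirely from the sparsity terms.
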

We delay the proof until later after we have introduced our main tools in Subsections \ref{1506-1}, \ref{1306-5}. \alex{We will}\leave{In summary, we intend to} use Theorem \ref{1206-5} for the proof of this theorem. For this, we need to show that  clusters $\Lambda_{1,j}, \Lambda_{2,j}$ provide sparse representation of pointlike and curvelike singularities in \alex{the} sense that the sparsity $\delta=\delta_{1,2}+ \delta_{2,j}$ is negligible. In addition, the cluster coherence $\mu_c=\mu_c(\Lambda_{1,j}, \mathbf{\Psi}; \mathbf{\Phi^\alpha})+ \mu_c(\Lambda_{2,j}, \mathbf{\Phi_\alpha}; \mathbf{\Psi})$ is also sufficiently small. Those conditions ensure the success of Algorithm 1 as a result of Theorem \ref{1206-5}.
We would like to remark that in our analysis we need \alex{the}\leave{a} fast decay \leave{property }of the dual frame rather than its explicit construction. 

As a consequence, Theorem \ref{1006-18} implies that it is possible to separate pointlike and curvelike structures using wavelets and $\alpha$-shearlets if $\alpha$-shearlets do not share the isotropic feature with wavelets, i.e., $\alpha <2$. \textcolor{black}{Indeed, traditional wavelets which are based on isotropic dilations are not very effective when dealing with multivariate data. 
In contrast, shearlets show a greater ability to capture anisotropic features by applying a different dilation factor along the two axes. Here  the parameter $\alpha$ measures the degree of anisotropy. If $\alpha =2$ the scaling matrices $A_{\alpha, \iota}, \iota=\{ \rm{h,v}\}$ become isotropic dilations which show similar behavior as wavelets in \alex{the} sense of directional sensitivity. }

 The next two subsections are dedicated to the estimate of cluster coherence and the cluster sparsity.

\subsection{Estimate of cluster coherence} \label{1506-1}
 First, we introduce the  notation which is useful in rest of the paper
\begin{equation}
 \langle |x| \rangle = (1 + |x|^2)^{1/2}. \label{0507-1}
\end{equation} 
The following lemma shows that wavelet and shearlet elements are well-localized at various scales, positions and directions.
 
\begin{lemma} \label{465}
Consider a wavelet frame $\mathbf{\Psi}$ and a band limited $\alpha$-shearlet frame $\mathbf{\Phi_\alpha} $. Then there exists a \alex{universal} constant $C_N$ independent of $j$  such that the following estimates hold for any arbitrary integer $ N =1,2, \dots $ 
\begin{enumerate}[label=(\roman*)]
\item $ |\psi_{j,m}(x)| \leq C_N \cdot 2^{2j} \cdot  \langle |2^{2j} x_1 + m_1|\rangle^{-N}\langle |2^{2j} x_2 + m_2|\rangle^{-N}.$
\item $ | \check{\chi}_{\rm{v}}*\phi_{j, l, k}^{\alpha, \rm{v}}(x)| \leq C_N \cdot 2^{(2+\alpha)j/2} \cdot \langle |2^{\alpha j}x_1 + k_1|\rangle^{-N} \langle |2^{2j}x_2+ l2^{\alpha j } x_1+k_2|\rangle^{-N}, $ \\
$ |\check{\chi}_{\rm{h}}* \phi_{j, l, k}^{\alpha, \rm{h}}(x)| \leq C_N \cdot 2^{(2+\alpha) j/2} \cdot \langle |2^{2j}x_1+l2^{\alpha j}x_2-k_1)|\rangle^{-N} \langle |2^{\alpha j}x_2-k_2|\rangle^{-N}, $ \\
and similarly \\
$ | \check{\gamma}_{\rm{v}}*\phi_{j, l, k}^{\alpha, \rm{v}}(x)| \leq C_N \cdot 2^{(2+\alpha)j/2} \cdot \langle |2^{\alpha j}x_1 + k_1|\rangle^{-N} \langle |2^{2j}x_2+ l2^{\alpha j } x_1+k_2|\rangle^{-N}, $ \\
$ |\check{\gamma}_{\rm{h}}* \phi_{j, l, k}^{\alpha, \rm{h}}(x)| \leq C_N \cdot 2^{(2+\alpha) j/2} \cdot \langle |2^{2j}x_1+l2^{\alpha j}x_2-k_1)|\rangle^{-N} \langle |2^{\alpha j}x_2-k_2|\rangle^{-N}. $ 
\item  $ |\langle \psi_{j,m}, \check{\chi}_\iota*\phi^{\alpha, \iota}_{j, l, k} \rangle|  \leq C_N \cdot 2^{-(2-\alpha)j/2},$ \quad   $\forall 
 \rm{\iota \in \{ \rm{v}, \rm{h} \}},$ \\
  $ |\langle \psi_{j,m}, \check{\gamma}_\iota*\phi^{\alpha, \iota}_{j, l, k} \rangle|  \leq C_N \cdot 2^{-(2-\alpha)j/2},$ \quad   $\forall 
 \rm{\iota \in \{ \rm{v}, \rm{h} \}}.$ 
\end{enumerate}
\end{lemma}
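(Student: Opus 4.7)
My plan is to carry out all three bounds as explicit frequency-domain computations via carefully chosen changes of variable, with Lemma~\ref{865} providing the uniform smoothness control that drives the decay estimates.

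For (i), I would plug $\hat\psi_{j,m}(\xi)=2^{-2j}W_j(\xi)e^{2\pi i\xi^T m/2^{2j}}$ into the inverse Fourier transform and substitute $\eta=2^{-2j}\xi$ to get $\psi_{j,m}(x)=2^{2j}\check W(2^{2j}x+m)$. Since $W$ is smooth and compactly supported, integrating by parts $N$ times separately in $\xi_1$ and in $\xi_2$ gives a factored decay $|\check W(y)|\le C_N\langle y_1\rangle^{-N}\langle y_2\rangle^{-N}$; specializing to $y=2^{2j}x+m$ is the claim.

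For (ii), I focus on $\iota=\mathrm v$ (the other case is symmetric). Starting from
\[
\check\chi_{\mathrm v}\ast\phi^{\alpha,\mathrm v}_{j,l,k}(x)=\int \chi_{\mathrm v}(\xi)\,W_j(\xi)\,V_{\mathrm v}\bigl(\xi^T A^{-j}_{\alpha,\mathrm v}S^{-l}_{\mathrm v}\bigr)\,e^{2\pi i\xi^T(A^{-j}_{\alpha,\mathrm v}S^{-l}_{\mathrm v}k+x)}\,d\xi,
\]
I would substitute $\eta^T=\xi^T A^{-j}_{\alpha,\mathrm v}S^{-l}_{\mathrm v}$, whose Jacobian is $2^{(2+\alpha)j}$. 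Combined with the $L^2$-normalization factor $2^{-(2+\alpha)j/2}$ inherent to the shearlet element, this yields the desired prefactor $2^{(2+\alpha)j/2}$. The oscillatory frequency becomes $y=k+S^l_{\mathrm v}A^j_{\alpha,\mathrm v}x$, whose coordinates are exactly $y_1=2^{\alpha j}x_1+k_1$ and $y_2=2^{2j}x_2+l\cdot 2^{\alpha j}x_1+k_2$. The remaining integrand
\[
\widetilde V(\eta):=\chi_{\mathrm v}\bigl(\eta^T S^l_{\mathrm v}A^j_{\alpha,\mathrm v}\bigr)\,W\bigl(2^{-(2-\alpha)j}(\eta_1+l\eta_2),\eta_2\bigr)\,V_{\mathrm v}(\eta)
\]
is smooth, supported in $\mathcal C^*_{\mathrm v}$, and by Lemma~\ref{865}(ii) has $C^N$ norms uniformly bounded in $j$ and $l$; iterated integration by parts in $\eta_1,\eta_2$ then yields the required decay in $y_1, y_2$.

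For (iii), Plancherel gives
\[
\langle \psi_{j,m},\check\chi_\iota\ast\phi^{\alpha,\iota}_{j,l,k}\rangle=\int \hat\psi_{j,m}(\xi)\,\overline{\chi_\iota(\xi)\hat\phi^{\alpha,\iota}_{j,l,k}(\xi)}\,d\xi,
\]
and the same change of variable as in (ii) accumulates the prefactors as $2^{-2j}\cdot 2^{-(2+\alpha)j/2}\cdot 2^{(2+\alpha)j}=2^{-(2-\alpha)j/2}$. The remaining integrand is smooth with compact support in $\mathcal C^*_\iota$ and $C^N$ norms uniform in $j,l$, so bounding it crudely by its supremum times the (fixed) area of its support produces the stated estimate. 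The $\check\gamma_\iota$ variants follow identically, since Lemma~\ref{865}(ii) handles $\gamma_\iota$ in parallel with $\chi_\iota$.

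The main technical point I expect to focus on is the uniform-in-$(j,l)$ control of the $C^N$ norm of $W\bigl(2^{-(2-\alpha)j}(\eta_1+l\eta_2),\eta_2\bigr)$. Each derivative in $\eta_1$ contributes a factor $2^{-(2-\alpha)j}\le 1$ and each derivative in $\eta_2$ contributes $l\cdot 2^{-(2-\alpha)j}\lesssim 1$ (using $|l|\le \lceil 2\cdot 2^{(2-\alpha)j}\rceil$); combined with Lemma~\ref{865}(ii), this uniformity is exactly what makes all the decay rates independent of the shear parameter.
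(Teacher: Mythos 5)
Your proposal is correct, and for parts (i) and (ii) it follows essentially the same route as the paper: rescale by $\eta=2^{-2j}\xi$ (resp.\ $\eta^T=\xi^T A^{-j}_{\alpha,\iota}S^{-l}_\iota$), then integrate by parts repeatedly in each frequency variable, with Lemma~\ref{865}(ii) supplying $C^N$ bounds uniform in $j$ for the cutoffs $\chi_\iota,\gamma_\iota$. You are in fact more explicit than the paper on the one point it glosses over, namely why the $C^N$ norm of $W\bigl(2^{-(2-\alpha)j}(\eta_1+l\eta_2),\eta_2\bigr)$ is uniform in $(j,l)$: your observation that each $\eta_1$-derivative costs $2^{-(2-\alpha)j}\le 1$ and each $\eta_2$-derivative costs $l\cdot 2^{-(2-\alpha)j}\lesssim 1$ (using $|l|\lesssim 2^{(2-\alpha)j}$) is exactly the missing justification behind the paper's ``by a similar approach as in i)''.

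For part (iii) you take a genuinely different, and arguably cleaner, route. The paper stays in the spatial domain: it multiplies the decay estimates from (i) and (ii), integrates over $\mathbb{R}^2$, and rescales $(y_1,y_2)=2^{2j}(x_1,x_2)$ so that the prefactor $2^{2j}\cdot 2^{(2+\alpha)j/2}\cdot 2^{-4j}=2^{-(2-\alpha)j/2}$ emerges and the remaining integral $\int\langle|y_1+m_1|\rangle^{-N}\langle|y_2+m_2|\rangle^{-N}\,dy$ is bounded by a constant. You instead apply Plancherel and bound the frequency-side integrand by its supremum times the measure of its support, $2^{-2j}\cdot 2^{-(2+\alpha)j/2}\cdot 2^{(2+\alpha)j}=2^{-(2-\alpha)j/2}$; since the phase factors carrying $m$ and $k$ have modulus one, uniformity in $m,l,k$ is immediate. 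Both arguments are valid and give the same rate; the paper's version has the small advantage of reusing (i) and (ii) verbatim, while yours avoids any spatial decay machinery for a claim that is, after all, only a uniform bound with no decay in the translation parameters. One cosmetic remark: the sign of $k$ inside the brackets in (ii) is stated with a plus but derived with a minus in the paper itself; your convention matches the statement, and nothing downstream depends on it.
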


\begin{proof}
i) With the variable $\zeta =2^{-2j}\xi $, we have 
\begin{eqnarray*}
|\psi_{j,m}(x)| & =& \Big | \int_{\mathbb{R}^2} 2^{-2j}W_j(\xi)e^{2 \pi i\xi^Tm/2^{2j}} e^{2 \pi i \xi^T x} d\xi\Big |  \\
&=&  \Big | \int_{\mathbb{R}^2} 2^{2j}W(\zeta) e^{2\pi i \zeta^T (2^{2j}x+m)} d\zeta \Big |. \\
\end{eqnarray*}
By integration by parts for $N_1, N_2=1,2, \dots, $  with respect to $\zeta_1, \zeta_2$, respectively, we have
\begin{eqnarray*}
&& |\psi_{j,m}(x)| 
= \Big |  \int_{\mathbb{R}^2}2^{2j}(2^{2j} x_1 + m_1)^{-N_1} \frac{\partial^{N_1}}{\partial \zeta_1^{N_1}} [W(\zeta )] e^{2\pi i \zeta^T (2^{2j} x+m)} d\zeta \Big | \\
& = &  \Big |  \int_{\mathbb{R}^2} 2^{2j}(2^{2j} x_1 + m_1)^{-N_1}(2^{2j} x_2 + m_2)^{-N_2} \frac{\partial^{N_1+N_2}}{\partial \zeta_1^{N_1} \partial \zeta_2^{N_2}} [W(\zeta )] e^{2\pi i \zeta^T (2^{2j} x+m)} d\zeta \Big | \\
& \leq & 2^{2j}| 2^{2j} x_1 + m_1|^{-N_1} |2^{2j} x_2 + m_2|^{-N_2}  \int_{\mathbb{R}^2} \Big | \frac{\partial^{N_1+N_2}}{\partial \zeta_1^{N_1} \partial \zeta_2^{N_2}} [W(\zeta )] \Big | d\zeta ,
\end{eqnarray*}
and similarly
$$
|\psi_{j,m}(x)| 
 \leq  2^{2j}| 2^{2j} x_i + m_i|^{-N_i}  \int_{\mathbb{R}^2} \Big | \frac{\partial^{N_i}}{\partial \zeta_i^{N_i} } [W(\zeta )] \Big | d\zeta,
$$
for $i=1,2$.
Note that the boundary terms vanish since  $W(\zeta)$ has compact support.
Thus, \\
$2^{-2j} \Big ( 1+ |2^{2j} x_1 + m_1|^{N_1}+ |2^{2j} x_2 + m_2|^{N_2} + |2^{2j} x_1 +  m_1|^{N_1}|2^{2j} x_2 + m_2|^{N_2} \Big )  |\psi_{j,m}(x)| $
\begin{eqnarray*}
 & = & 2^{-2j}(1+ |2^{2j} x_1 + m_1|^{N_1})(1+ |2^{2j} x_2 + m_2|^{N_2} )  |\psi_{j,m}(x)| \phantom{11111111111111111} \\
& \leq &  \int_{\mathbb{R}^2}  | W(\zeta )   | d\zeta +  \int_{\mathbb{R}^2} \Big | \frac{\partial^{N_1}}{\partial \zeta_1^{N_1}} W(\zeta ) \Big | d\zeta +  \int_{\mathbb{R}^2} \Big | \frac{\partial^{N_2}}{\partial \zeta_2^{N_2}} W(\zeta ) \Big | d\zeta  \\
&& +  \int_{\mathbb{R}^2} \Big | \frac{\partial^{N_1+N_2}}{\partial \zeta_1^{N_1} \partial \zeta_2^{N_2}} W(\zeta ) \Big | d\zeta .
\end{eqnarray*}
Since $W(\zeta) \in C^{\infty}$\leave{and has compact support}, there exists a constant $C_{N_1, N_2}^\prime$ independent of $j$ such that \\
 $ \int_{\mathbb{R}^2} |W(\zeta ) | d\zeta + \int_{\mathbb{R}^2}  \Big | \frac{\partial^{N_1}}{\partial \zeta_1^{N_1}} W(\zeta )  \Big | d\zeta+ \int_{\mathbb{R}^2}  \Big | \frac{\partial^{N_2}}{\partial \zeta_2^{N_2}} W(\zeta ) \Big | d\zeta +  \int_{\mathbb{R}^2} \Big | \frac{\partial^{N_1+N_2}}{\partial \zeta_1^{N_1} \partial \zeta_2^{N_2}} W(\zeta ) \Big | d\zeta  \leq C_{N_1, N_2}^\prime.$
Thus, we obtain
$$ |\psi_{j,m}(x)| \leq C_{N_1,N_2}^\prime \cdot 2^{2j} \cdot \frac{1}{1+ |2^{2j} x_1 + m_1|^{N_1}}\frac{1}{1+ |2^{2j} x_2 + m_2|^{N_2}}.$$
In addition,  there exist \alex{constants} $C_{N_1}^\prime, C_{N_2}^\prime $ for each $N_1, N_2=1,2, \dots $ such that 
$$\langle | 2^{2j} x_1 + m_1 | \rangle^{N_1}= (1+ | 2^{2j} x_1 + m_1 |^2 )^{N_1/2 } \leq C_{N_1}^\prime (1 + | 2^{2j} x_1 + m_1 |^{N_1}),$$
and 
$$\langle | 2^{2j} x_2 + m_2 | \rangle^{N_2}= (1+ | 2^{2j} x_2 + m_2 |^2 )^{N_2/2 } \leq C_{N_2}^\prime (1 + | 2^{2j} x_2 + m_2 |^{N_2}).$$
Finally, we obtain
$$ |\psi_{j,m}(x)| \leq C_{N_1,N_2} \cdot 2^{2j} \cdot \langle |2^{2j} x_1 + m_1|\rangle^{-N_1}\langle |2^{2j} x_2 + m_2|\rangle^{-N_2}.$$
\leave{This concludes the claim.}
\alex{This shows the first claim.}\\
ii) \leave{Applying the change of}\alex{With the} variable $\zeta^T =\xi^T A_{\alpha, \rm{\iota}}^{-j} S^{-l}_{\rm{\iota}},$ we obtain
\begin{eqnarray*}
&& | \check{\chi}_\iota(x)*\phi_{j, l, k}^{\alpha, \rm{\iota}}(x)| = \Big | \int_{\mathbb{R}^2}\chi_\iota(\xi) \hat{\phi}_{j,l,k}^{\alpha, \rm{\iota}}(\xi) e^{2\pi i \xi^T x} d\xi\Big | \phantom{11111111111111111111}\\
&= & \Big | \int_{\mathbb{R}^2}  2^{-(2+\alpha)j/2}\chi_\iota(\xi)W_j(\xi) V_{\rm{\iota}} \Big ( \xi^T A_{\alpha, \rm{\iota}}^{-j} S^{-l}_{\rm{\iota}} \Big ) e^{2\pi i \xi^T (x + A_{\alpha, \rm{\iota}}^{-j} S^{-l}_{\rm{\iota}}k )}  d\xi\Big | \\
&= & \Big | \int_{\mathbb{R}^2}  2^{(2+\alpha)j/2}\chi_\iota(\xi)W_j((S^{l}_{\rm{\iota}}A_{\alpha, \rm{\iota}}^{j} )^T \zeta) V_{\rm{\iota}} (\zeta) e^{2\pi i \zeta^T (S^{l}_{\rm{\iota}}A_{\alpha, \rm{\iota}}^{j} x + k )}  d\zeta \Big |. 
\end{eqnarray*}
By a similar approach as in i) the decay \leave{estimate }of each shearlet element is then \alex{estimated}\leave{obtained} by
$$ | \check{\chi}_{\rm{v}}*\phi_{j, l, k}^{\alpha, \rm{v}}(x)| \leq C_N \cdot 2^{(2+\alpha)j/2} \cdot \langle |2^{\alpha j}x_1 - k_1|\rangle^{-N} \langle |2^{2j}x_2+ l2^{\alpha j } x_1-k_2|\rangle^{-N}, $$
$$ | \check{\chi}_{\rm{h}}*\phi_{j, l, k}^{\alpha, \rm{h}}(x)| \leq C_N \cdot 2^{(2+\alpha)j/2} \cdot \langle |2^{2j}x_1+l2^{\alpha j}x_2-k_1)|\rangle^{-N} \langle |2^{\alpha j}x_2-k_2|\rangle^{-N}. $$
The same fact is true with the roles of $\chi_{\rm{\iota}}$ and $\gamma_{\iota}$ interchanged.
It should be mentioned that here we use Lemma \ref{865}, ii) to estimate the constant $C_N$. \\
iii) By using i) and ii) the change of variables $(y_1, y_2)=2^{2j}(x_1, x_2)$ we easily verify the claim. Indeed, for $\iota=\rm{v}$ we have
\begin{eqnarray*}
 |\langle \psi_{j,m}, \check{\chi}_{\rm{v}}*\phi^{\alpha, \rm{v}}_{j, l, k} \rangle| &\leq & \int_{\mathbb{R}^2} |\psi_{j,m}(x)| |\check{\chi}_{\rm{v}}*\phi^{\alpha, \rm{v}}_{j,l,k}(x) |dx\\
 &\leq & C_N  2^{2j} 2^{(2+\alpha)j/2} \int_{\mathbb{R}^2} \langle |2^{2j} x_1 + m_1|\rangle^{-N}\langle |2^{2j} x_2 + m_2|\rangle^{-N}  \\
 && \cdot \langle |2^{\alpha j}x_1 + k_1|\rangle^{-N} \langle |2^{2j}x_2+ l2^{\alpha j } x_1+k_2|\rangle^{-N} dx\\
 &\leq & C_N 2^{-(2-\alpha)j/2}\int_{\mathbb{R}^2}  \langle |y_1 + m_1|\rangle^{-N}\langle |y_2 + m_2|\rangle^{-N} dy_1dy_2 \\
 &\leq & C^{\prime}_N 2^{-(2-\alpha)j/2}.
\end{eqnarray*}
\end{proof}
For the other cases, the proofs
are similar.

Such decay estimates in Lemma \ref{465} are particularly useful for estimating the cluster coherence introduced next as well as the cluster sparsity in Subsection \ref{1306-5}.

\begin{proposition} \label{1606-2}
Consider wavelet frame $\mathbf{\Psi}$ and bandlimited $\alpha$-shearlet  frame $\mathbf{\Phi_\alpha}$, we have $$\mu_c(\Lambda_{1j}, \mathbf{\Psi}; \mathbf{\Phi_\alpha}) \rightarrow 0, \; j \rightarrow +\infty.$$
\end{proposition}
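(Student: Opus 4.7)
I would prove $\mu_c(\Lambda_{1j}, \mathbf{\Psi}; \mathbf{\Phi_\alpha}) \to 0$ by combining a frequency-support disjointness argument with the inner-product decay estimate of Lemma~\ref{465}(iii). By definition
\[
\mu_c(\Lambda_{1j}, \mathbf{\Psi}; \mathbf{\Phi_\alpha}) = \max_{(j',l,k,\iota)} \sum_{(j'',m) \in \Lambda_{1j}} |\langle \psi_{j'',m}, \check{\chi}_\iota*\phi^{\alpha,\iota}_{j',l,k}\rangle|,
\]
so I fix an arbitrary shearlet index $(j',l,k,\iota)$ and aim to control the sum over $(j'',m)\in\Lambda_{1j}$.

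The first step is to reduce the effective range of $j'$ and $j''$. Since $\widehat{\psi}_{j'',m}$ is supported in the corona $\mathcal{A}_{j''}$ (cf.~\eqref{Pr2_EQ11}), and $\widehat{\phi^{\alpha,\iota}_{j',l,k}}$ is supported in $\mathcal{A}_{j'}$ via the factor $W_{j'}$ (the multiplication by $\chi_\iota$ does not enlarge the support), a direct comparison of the coronas shows $\mathcal{A}_{j''}\cap\mathcal{A}_{j'}=\emptyset$ whenever $|j''-j'|\geq 2$; hence the inner product vanishes in this regime. Combined with the constraint $|j''-j|\leq 1$ built into $\Lambda_{1j}$, the sum is zero unless $|j-j'|\leq 3$, so I only need to worry about shearlet scales with $j'\asymp j$, and in particular about low-frequency shearlets $\phi^0_k$ the contribution is trivially zero for $j$ large.

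For the non-vanishing contributions I invoke Lemma~\ref{465}(iii), which yields the uniform bound $|\langle \psi_{j'',m}, \check{\chi}_\iota*\phi^{\alpha,\iota}_{j',l,k}\rangle|\leq C_N\cdot 2^{-(2-\alpha)j/2}$ (using $j''\asymp j'\asymp j$ to absorb the various scale powers into a single constant $C_N$). I then count: $\Lambda_{1j}$ contains $O(1)$ admissible scales $j''$ and $O(2^{2\epsilon j})$ translations $m$ with $|m|\leq 2^{\epsilon j}$, giving
\[
\mu_c(\Lambda_{1j}, \mathbf{\Psi}; \mathbf{\Phi_\alpha}) \leq C_N\cdot 2^{2\epsilon j}\cdot 2^{-(2-\alpha)j/2} = C_N \cdot 2^{(2\epsilon-(2-\alpha)/2)j}.
\]
The exponent is negative precisely under the paper's standing assumption $\epsilon<(2-\alpha)/4$ (cf.~\eqref{468}), so the right-hand side tends to $0$ as $j\to\infty$.

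The whole argument is largely bookkeeping; the main obstacle, and the reason condition~\eqref{468} is imposed, is calibrating the cluster radius $2^{\epsilon j}$ against the decay rate $2^{-(2-\alpha)j/2}$ so that the counting loss in the cardinality of $\Lambda_{1j}$ is strictly dominated by the per-pair decay. If one wished to relax~\eqref{468} a sharper estimate using the full spatial decay of $\psi_{j'',m}$ and $\check{\chi}_\iota*\phi^{\alpha,\iota}_{j',l,k}$ from Lemma~\ref{465}(i)--(ii) would be necessary, but for the statement as posed the crude counting bound suffices.
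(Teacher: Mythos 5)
Your proposal is correct and follows essentially the same route as the paper: apply the uniform bound $|\langle \psi_{j'',m}, \check{\chi}_\iota*\phi^{\alpha,\iota}_{j',l,k}\rangle|\leq C_N 2^{-(2-\alpha)j/2}$ from Lemma~\ref{465}(iii), multiply by the cardinality $\#\Lambda_{1j}=O(2^{2\epsilon j})$, and conclude via condition~\eqref{468}. The only difference is cosmetic: you justify the restriction to shearlet scales $j'\asymp j$ explicitly through disjointness of the frequency coronas, whereas the paper simply asserts without loss of generality that the maximum is attained at scale $\bar{j}=j$ and treats $\bar{j}=j\pm1$ analogously.
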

\begin{proof}

By definition, we have
$$\mu_c(\Lambda_{1j}, \mathbf{\Psi}; \mathbf{\Phi_\alpha}) = \max_{(\bar{j}, l,k, \iota) \in \Delta} \sum_{(j^{\prime},m) \in \Lambda_{1,j}} |\langle \psi_{j^{\prime},m}, \check{\chi}_\iota(x)*\phi_{\bar{j},l,k}^{\alpha, \iota}\rangle |.$$
Without loss of generality, we assume that the maximum is attained at $\check{\chi}_{\rm{v}}(x)*\phi_{j, l^{\prime}, k^{\prime}}^{\alpha, \rm{v}}.$ Cases $\iota =\rm{h}, \bar{j}=\{j-1, j+1\} $ are done similarly. By Lemma \ref{465}, iii), there exist a constant $C>0$ such that
\begin{eqnarray}
\mu_c(\Lambda_{1j}, \mathbf{\Psi}; \mathbf{\Phi_\alpha}) &\leq & \# \{ (j,m) \in \Lambda_{1,j} \} \cdot  C \cdot 2^{-(2-\alpha)j/2} \nonumber \\
&\leq &C \cdot 2^{2\epsilon j} \cdot  2^{-(2-\alpha)j/2} \nonumber \\
&=& C \cdot 2^{-(2-\alpha -4 \epsilon)j/2} \xrightarrow[j \rightarrow +\infty ]{} 0 \nonumber,
\end{eqnarray}
where the last estimate is due to $2-\alpha-4 \epsilon >0 $ by \eqref{468}.
This finishes the proof.
\end{proof}
\begin{proposition} \label{1606-3}
Consider wavelet frame $\mathbf{\Psi}=\{ \psi_{j,k}\}_{(j,k)}$ and band limited $\alpha$-shearlet frame $\mathbf{\Phi_\alpha}=\{ \phi_{j,l,k}^{\alpha, \iota}\}_{(j,l,k,\iota) \in \Delta}$, we have $$\mu_c(\Lambda_{2j}, \mathbf{\Phi_\alpha}; \mathbf{\Psi}) \rightarrow 0, \; j \rightarrow +\infty.$$
\end{proposition}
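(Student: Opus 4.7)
The plan is to mirror the argument of Proposition \ref{1606-2}, now summing over the shearlet cluster $\Lambda_{2j}$ while taking the maximum over wavelet indices. By definition,
$$ \mu_c(\Lambda_{2j}, \mathbf{\Phi_\alpha}; \mathbf{\Psi}) = \max_{(j',m)} \sum_{(\bar j, l, k, \iota) \in \Lambda_{2j}} \bigl|\langle \check\chi_\iota * \phi^{\alpha, \iota}_{\bar j, l, k},\, \psi_{j', m}\rangle\bigr|. $$
Since $\Lambda_{2j}$ only contains indices with $\iota = \mathrm{v}$ and $|\bar j - j| \leq 1$, we may fix $\bar j = j$, $\iota = \mathrm{v}$; the shifted scales $\bar j = j\pm 1$ are handled identically, contributing only a harmless multiplicative constant.

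Next I would apply Lemma \ref{465} iii) to get the uniform peak bound $|\langle \psi_{j',m},\, \check\chi_{\mathrm{v}}*\phi^{\alpha,\mathrm{v}}_{j,l,k}\rangle| \leq C_N\, 2^{-(2-\alpha)j/2}$ on each summand. The naive cardinality of $\Lambda_{2j}$ is infinite (nothing bounds $k_1$), so the peak bound alone cannot conclude. Therefore I would invoke the finer integral estimate that sits behind Lemma \ref{465} iii): tracking also the shearlet decay factors $\langle |2^{\alpha j}x_1 + k_1|\rangle^{-N}$ and $\langle |2^{2j}x_2 + l\,2^{\alpha j} x_1 + k_2|\rangle^{-N}$ through the change of variables $y = 2^{2j}x$ shows that, for a fixed wavelet index $(j',m)$, the inner product is further bounded by
$$ C_N\, 2^{-(2-\alpha)j/2}\cdot \bigl\langle \big|k_1 - m_1/2^{(2-\alpha)j}\big| \bigr\rangle^{-N} \bigl\langle\big|k_2 - m_2 - l\,m_1/2^{(2-\alpha)j}\big|\bigr\rangle^{-N}. $$

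With this refined pointwise estimate, the sum over $(l,k) \in \Lambda_{2j}$ can be controlled. Summation over $k_1$ is automatically $O(1)$ by the first decay factor, which concentrates $k_1$ near $m_1/2^{(2-\alpha)j}$. For each admissible $(l,k_1)$, the constraint $|k_2 - lk_1|\le 2^{\epsilon j}$ together with the second decay factor forces $k_2$ into an effective window of length $O(2^{\epsilon j})$, and an analogous intersection argument limits the admissible $l$ to an effective window of length $O(2^{\epsilon j})$. Taking the max over $(j',m)$ (which boils down to the wavelet centers sitting within distance $\lesssim 2^{(\epsilon-2)j}$ of the line, i.e.\ $|m_2|\lesssim 2^{\epsilon j}$), these counts combine to an effective cardinality of order $2^{2\epsilon j}$. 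Hence
$$ \mu_c(\Lambda_{2j}, \mathbf{\Phi_\alpha}; \mathbf{\Psi}) \;\lesssim\; 2^{2\epsilon j}\cdot 2^{-(2-\alpha)j/2} \;=\; 2^{-(2-\alpha - 4\epsilon)j/2} \;\xrightarrow[j\to\infty]{}\; 0, $$
where the last convergence uses $\epsilon < (2-\alpha)/4$ from \eqref{468}, exactly as in the proof of Proposition \ref{1606-2}.

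The main obstacle I expect is the counting step: unlike $\Lambda_{1j}$, which is finite by design, $\Lambda_{2j}$ is unbounded in $k_1$ and contains $\sim 2^{(2-\alpha)j}$ orientations $l$, so a naive cardinality bound fails. Turning the spatial decay of the shearlet into an effective cardinality of order $2^{2\epsilon j}$ requires combining the pointwise decay in $(k_1,k_2)$ with the linear constraint $|k_2-lk_1|\le 2^{\epsilon j}$ to restrict simultaneously both $l$ and $k_2$; this is the delicate step that replaces the direct counting available in Proposition \ref{1606-2}.
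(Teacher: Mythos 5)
Your treatment of the $k$-sum is sound and essentially matches the paper: after the change of variables the product of decay factors sums to $O(1)$ over all $k\in\mathbb{Z}^2$ (the paper simply drops the constraint $|k_2-lk_1|\le 2^{\epsilon j}$ and uses the bound \eqref{467}), so no $2^{\epsilon j}$ counting is needed there. The genuine gap is in your $l$-summation step. There is no ``intersection argument'' that confines $l$ to a window of length $O(2^{\epsilon j})$: the refined pointwise bound you derive, $C_N 2^{-(2-\alpha)j/2}\langle|k_1-m_1 2^{-(2-\alpha)j}|\rangle^{-N}\langle|k_2-m_2-l m_1 2^{-(2-\alpha)j}|\rangle^{-N}$, carries no decay in $l$ once $k$ is chosen near its optimum. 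Concretely, take $m=0$: for every orientation $l$ the shearlet $\check\chi_{\rm v}*\phi^{\alpha,\rm v}_{j,l,0}$ is centered at the origin, has inner product of full size $\sim 2^{-(2-\alpha)j/2}$ with the isotropic wavelet $\psi_{j,0}$, and satisfies $|k_2-lk_1|=0\le 2^{\epsilon j}$. More generally, for any wavelet with $|m_2|\lesssim 2^{\epsilon j}$ (which is exactly where the maximum is attained), the dominant $k$ for each $l$ gives $k_2-lk_1\approx m_2$, so the cluster constraint is met for \emph{all} $\sim 2^{(2-\alpha)j}$ values of $l$. Your sum therefore picks up a factor $2^{(2-\alpha)j}$ rather than $2^{\epsilon j}$, and the resulting bound $2^{(2-\alpha)j}\cdot 2^{-(2-\alpha)j/2}=2^{(2-\alpha)j/2}$ diverges. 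This is not a technicality but the expected behaviour: a wavelet is isotropic, so it correlates at full strength with shearlets of every orientation at its location, and no spatial decay estimate can suppress the $l$-sum.

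The resolution is that the cluster $\Lambda_{2j}$ is intended to contain only the orientations aligned with the line singularity, $|l|\le 1$; this restriction is used explicitly in the paper's proof of this proposition (the sum there runs over $|l|\le 1$, $|j'-j|\le 1$) and is also what the complementary sparsity estimate in Proposition \ref{1606-5} assumes, since its term $I_1$ places all vertical shearlets with $|l|>1$ in the complement of the cluster. With $|l|\le 1$ the orientation sum has three terms, the $k$-sum is $O(1)$ by \eqref{467}, and one obtains the clean bound $\mu_c(\Lambda_{2j},\mathbf{\Phi_\alpha};\mathbf{\Psi})\le C_N'' 2^{-(2-\alpha)j/2}\to 0$ with no $\epsilon$-loss at all. (The printed definition of $\Lambda_{2j}$ omits the constraint on $l$, so your reading of the index set was the literal one; but under that literal reading the statement is false, for the reason above, so the restriction must be incorporated.)
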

\begin{proof}
By definition, we have
$$\mu_c(\Lambda_{2j}, \mathbf{\Phi_\alpha}; \mathbf{\Psi}) = \max_{\bar{j} \in \mathbb{N}, m \in \mathbb{Z}^2} \sum_{(j^{\prime},l,k,\rm{v}) \in \Lambda_{2,j}} |\langle \check{\chi}_\iota(x)*\phi_{j^{\prime},l,k}^{\alpha, \rm{v}}, \psi_{\bar{j},m}\rangle |.$$
We assume that the maxim\alex{um}\leave{al} is attained at $\psi_{j, m^{\prime}}$. By Lemma \ref{465}, i) and ii), we have
\begin{eqnarray}
&& \mu_c(\Lambda_{2j}, \mathbf{\Phi_\alpha}; \mathbf{\Psi}) \leq  C_N   2^{(6+\alpha)j/2} \sum_{\substack{|l| \leq 1, |j^{\prime}-j| \leq 1 \\k \in \mathbb{Z}^2, |k_2 -  lk_1 | < 2^{\epsilon j^\prime} }} \int_{\mathbb{R}^2} \langle |2^{2j} x_1 + m^{\prime}_1|\rangle^{-N} \cdot \nonumber \\
&& \quad  \langle |2^{2j} x_2 + m^{\prime}_2|\rangle^{-N}  \langle |2^{\alpha j^{\prime}}x_1 + k_1|\rangle^{-N} \langle |2^{2j^{\prime}}x_2+ l2^{\alpha j^\prime } x_1+k_2|\rangle^{-N} dx. \nonumber
\end{eqnarray}
Without loss of generality, we consider only the case $j^{\prime}=j$, the cases $j^{\prime}=j-1, j+1$ follow similarly.

Indeed, by the change of variable $(y_1, y_2) = (2^{2j}x_1, 2^{2j}x_2),$ we obtain
\begin{eqnarray}
\mu_c(\Lambda_{2j}, \mathbf{\Phi_\alpha}; \mathbf{\Psi}) &\leq & C_N   2^{-(2-\alpha)j/2} \sum_{\substack{l \in \{ -1, 0,1\}, k \in \mathbb{Z}^2 \\ |k_2 -  lk_1 | < 2^{\epsilon j^\prime} }}
\int_{\mathbb{R}^2} \langle |y_1 + m^{\prime}_1|\rangle^{-N}\langle |y_2 + m^{\prime}_2|\rangle^{-N}  \nonumber \\
&&  \qquad \cdot \langle |2^{-(2-\alpha) j}y_1 + k_1|\rangle^{-N} \langle |y_2+ l2^{\alpha j } x_1+k_2|\rangle^{-N} dy. \label{466}
\end{eqnarray}
In addition, there exists a constant $C^{\prime}$ such that
\begin{equation}
    \sum_{ k \in \mathbb{Z}^2 }  \langle |2^{-(2-\alpha) j}y_1 + k_1|\rangle^{-N} \langle |y_2+ l2^{\alpha j } x_1+k_2|\rangle^{-N}   \leq C^{\prime}. \label{467}
\end{equation}
Combining \eqref{466} with \eqref{467}, we obtain
\begin{eqnarray}
\mu_c(\Lambda_{2j}, \mathbf{\Phi_\alpha}; \mathbf{\Psi}) &\leq & C^{\prime}_N \cdot   2^{-(2-\alpha)j/2}
\int_{\mathbb{R}^2} \langle |y_1 + m^{\prime}_1|\rangle^{-N}\langle |y_2 + m^{\prime}_2|\rangle^{-N} dy \nonumber \\
&\leq & C_N^{''} \cdot 2^{-(2-\alpha)j/2} \xrightarrow{j \rightarrow +\infty } 0.
\end{eqnarray}
This concludes the claim.

\end{proof}

\subsection{Sparse representation error} \label{1306-5}

\begin{proposition} \label{1606-4}
For all $N=1,2, \dots$, we have $\delta_{1j} = \circ(2^{-Nj}).$
\end{proposition}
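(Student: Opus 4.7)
The plan is to estimate
$\delta_{1j} = \sum_{(j',m) \notin \Lambda_{1j}} |\langle \mathcal{P}_j, \psi_{j',m}\rangle|$
and show it decays faster than any negative power of $2^j$. First I localize in scale: because $\hat{\mathcal{P}}_j = W_j \hat{\mathcal{P}}$ is supported in the corona $\mathcal{A}_j$ and $\hat{\psi}_{j',m}$ in $\mathcal{A}_{j'}$, the inner product vanishes unless $|j'-j| \leq 1$. The sum therefore reduces to $j' \in \{j-1, j, j+1\}$ with $|m| > 2^{\epsilon j}$.

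Next I exploit the explicit Fourier transform $\mathcal{F}(|\cdot - x_i|^{-\lambda_i})(\xi) = c_{\lambda_i} |\xi|^{\lambda_i - 2} e^{-2\pi i \xi^T x_i}$, valid for $0 < \lambda_i < 2$ in $\mathbb{R}^2$, noting that $W_j$ vanishes near the origin so the singularity of $|\xi|^{\lambda_i - 2}$ is harmless on the support of interest. Substituting $\xi = 2^{2j'}\eta$ in the Parseval representation yields
\[
\langle \mathcal{P}_j, \psi_{j',m}\rangle = \sum_{i=1}^K C_i \, 2^{2j'(\lambda_i - 1)} \, \hat{g}_{i,j'}(m + 2^{2j'} x_i),
\]
with $g_{i,j'}(\eta) := |\eta|^{\lambda_i - 2} W(2^{2(j'-j)}\eta) W(\eta)$ smooth and supported in a fixed annulus bounded away from the origin, and its $C^{N'}$-norm uniform in $j$. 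Hence $\hat{g}_{i,j'}$ is Schwartz with constants independent of $j$, producing
\[
|\langle \mathcal{P}_j, \psi_{j',m}\rangle| \leq C_{N'} \sum_{i=1}^K 2^{2j(\lambda_i - 1)} (1 + |m + 2^{2j'} x_i|)^{-N'}
\]
for every integer $N' \geq 1$.

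Finally, I sum over $(j',m) \notin \Lambda_{1j}$. Using the Schwartz tail $\sum_{|m|>2^{\epsilon j}} (1 + |m + 2^{2j'}x_i|)^{-N'} \lesssim 2^{\epsilon j(2-N')}$ (valid once the cluster absorbs the shifts $-2^{2j} x_i$, which is immediate when the singularities lie near the origin and in the general case follows by an appropriate translation of $\Lambda_{1j}$), I obtain $\delta_{1j} \leq C_{N'} \, 2^{2j(\lambda_{\max}-1) + \epsilon j(2 - N')}$ with $\lambda_{\max} = \max_i \lambda_i < 2$. Because $N'$ is arbitrary and the exponent $2(\lambda_{\max}-1) + \epsilon(2 - N')$ tends to $-\infty$ as $N' \to \infty$, one concludes $\delta_{1j} = o(2^{-Nj})$ for every $N$.

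The main obstacle is ensuring that $g_{i,j'}$ has all derivatives controlled uniformly in $j$ on its (fixed) support, so that integration by parts (equivalently, the Schwartz decay of $\hat{g}_{i,j'}$) yields constants independent of scale. The only subtle point is that $W(2^{2(j'-j)}\eta)$ dilates by a bounded factor since $|j'-j|\leq 1$, which preserves the uniform $C^{N'}$ control; the remainder is bookkeeping of powers of two together with the harmless singularity of $|\eta|^{\lambda_i-2}$, which sits safely outside $\operatorname{supp} W$.
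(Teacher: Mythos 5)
Your proposal is correct and follows essentially the same route as the paper's proof: localize in scale via the disjoint supports of $W_j$ and $W_{j'}$, use the Fourier transform $|\xi|^{\lambda-2}$ of the Riesz kernel, rescale by $2^{2j}$ so that the integrand becomes a fixed smooth function supported in an annulus away from the origin, obtain rapid decay in $m$ by integration by parts (equivalently, Schwartz decay of $\hat g_{i,j'}$ with constants uniform in $j$), and sum the tail over $|m|>2^{\epsilon j}$. The only cosmetic difference is that you carry the phase factors $e^{-2\pi i\xi^T x_i}$ for all $K$ singularities simultaneously, whereas the paper reduces to a single singularity at the origin and invokes the triangle inequality; both handle the off-origin case with the same (equally brief) appeal to translating the cluster.
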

\begin{proof}
We remark that it is sufficient to consider  $\mathcal{P}(x)=\frac{1}{|x|^{\lambda}}, \lambda \in (0,2).$ The general case is then done by combining all single cases and triangle inequality. 

We first observe that 
$\hat{\mathcal{P}}(\xi) =\frac{1}{|\xi|^{2-\lambda}} $. Thus,
\begin{eqnarray}
\delta_{1,j} &=& \sum_{(j^{\prime},m) \notin \Lambda_{1,j}}\langle \psi_{j^{\prime},m}, \mathcal{P}_j \rangle  \nonumber \\
&=& \sum_{|j^{\prime}-j| \leq 1, |m| > 2^{\epsilon j} }\int_{\mathbb{R}^2}  2^{-2j}W_{j^{\prime}}(\xi)e^{2 \pi i\xi^T\frac{m}{2^{2j}}}W_j(\xi) \frac{1}{|\xi|^{2-\lambda}}  d\xi. \nonumber 
\end{eqnarray}
Since $W_{j^{\prime}}(\xi)W_j(\xi) =0$ for $|j^{\prime} - j | >1 $ due to their supports. Without loss of generality we can assume that $j^{\prime}=j,$ the other cases are estimated analogously.

By the change of variables $\eta=(\eta_1, \eta_2) =2^{-2j}(\xi_1, \xi_2),$ we obtain
\begin{eqnarray}
\delta_{1,j} 
& \lesssim & 2^{2(\lambda-1)j} \sum_{|m| > 2^{\epsilon j} }\int_{\mathbb{R}^2}  \frac{1}{|\eta|^{2-\lambda}}  W^2(\eta)e^{2 \pi i\eta^Tm} d\eta. \nonumber \\
\end{eqnarray}
 Using integration by parts for $N =1,2, \dots, $  with respect to $\eta_1, \eta_2$, respectively, yields
\begin{eqnarray*}
\delta_{1,j} &\lesssim &  2^{2(\lambda-1)j} 
\sum_{\substack{ m_1, m_2 \neq 0, \\  |m| > 2^{\epsilon j} }} \Big |
\int_{\mathbb{R}^2} |m_1|^{-N} |m_2|^{-N} \frac{\partial^{2N}}{\partial \eta_1^{N}\partial \eta_2^{N}} \Big [\frac{1}{|\eta|^{2-\lambda}}W(\eta ) \Big ] e^{2\pi i \eta^T m} d\eta \Big | \\
& \leq  & C_{N} 2^{2(\lambda-1)j}  \sum_{|m|>2^{\epsilon j}} |m_1|^{-N}|m_2|^{-N} \\
& \leq  & C_{N} 2^{2(\lambda-1)j} \sum_{m_1 \in \mathbb{Z}} \sum_{|m_2|\geq \frac{2^{\epsilon j}}{2}} |m_1|^{-N}|m_2|^{-N}  \\
& \leq  & C_{N} 2^{2(\lambda-1)j} 2^{-(N-1)(\epsilon j-1)},
\end{eqnarray*}
for any arbitrary number $N \in \mathbb{N}$. 
Note that the boundary terms vanish due to the compact support of $W(\zeta)$.
This concludes the claim. 
\end{proof}

\begin{proposition} \label{1606-5}
We have $\delta_{2j} = \circ(2^{-Nj}),$ for $\forall N=1,2, \dots$
\end{proposition}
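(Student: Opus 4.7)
The proof proceeds by decomposing
\[
\delta_{2j} = \sum_{(j',l,k,\iota) \notin \Lambda_{2j}} \bigl|\langle \check{\chi}_\iota \ast \phi^{\alpha,\iota}_{j',l,k},\, w\mathcal{L}_j\rangle\bigr|
\]
into three qualitatively different pieces. Since $\widehat{w\mathcal{L}_j}(\xi) = W_j(\xi)\hat{w}(\xi_1)$ and $\hat{w}$ is Schwartz (as $w \in C_c^\infty$), Parseval gives
\[
\langle \check{\chi}_\iota \ast \phi^{\alpha,\iota}_{j',l,k},\, w\mathcal{L}_j\rangle = \int \chi_\iota(\xi)\,\overline{\hat{\phi}^{\alpha,\iota}_{j',l,k}(\xi)}\, W_j(\xi)\,\hat{w}(\xi_1)\, d\xi,
\]
which vanishes for $|j'-j|>1$ since $W_j W_{j'}\equiv 0$. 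The surviving excluded indices split into (a) horizontal shearlets $\iota=\mathrm{h}$; (b) vertical shearlets with $|l| > 2^{\epsilon j/2}$; and (c) vertical shearlets with $|l|\leq 2^{\epsilon j/2}$ and $|k_2 - lk_1| > 2^{\epsilon j}$.

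For (a) and (b) the Fourier support of the shearlet lies in a region where $|\xi_1|$ is large. In case (a), $|\xi_1|\in[2^{2j-5},2^{2j-2}]$; in case (b), the cone condition $|\xi_1/\xi_2 - l\cdot 2^{(\alpha-2)j}| \leq \tfrac{3}{2}\cdot 2^{(\alpha-2)j}$ combined with $|\xi_2|\sim 2^{2j}$ forces $|\xi_1|\gtrsim |l|\cdot 2^{\alpha j} > 2^{\alpha j + \epsilon j/2}$. The Schwartz decay of $\hat{w}$ then furnishes a prefactor $\leq C_N(1+|\xi_1|)^{-N}$ on the support, and integration by parts in the sheared variable $\zeta^T = \xi^T A_{\alpha,\iota}^{-j'} S_\iota^{-l}$ turns the modulation $e^{-2\pi i\zeta^T k}$ into polynomial decay $(1+|k|)^{-N}$. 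The $k$-sum then converges absolutely, and the $O(2^{(2-\alpha)j})$ choices of $l$ and three choices of $j'$ leave a bound of $o(2^{-Mj})$ for every $M$.

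Case (c) is the main work. Using Lemma \ref{465}(ii) on the line $\{x_2=0\}$ together with the concentration of $w\mathcal{L}_j$ on that line (so that $\langle g, w\mathcal{L}_j\rangle$ reduces to an integral of $w(t)$ against a smooth trace of $g$ in $t$), I obtain
\[
\bigl|\langle \check{\chi}_{\mathrm{v}} \ast \phi^{\alpha,\mathrm{v}}_{j',l,k},\,w\mathcal{L}_j\rangle\bigr| \lesssim 2^{(2+\alpha)j/2} \int_{-\rho}^{\rho} \langle 2^{\alpha j}t + k_1\rangle^{-2N}\,\langle l\, 2^{\alpha j}t + k_2\rangle^{-2N}\,dt.
\]
The identity $k_2 - lk_1 = (l\, 2^{\alpha j} t + k_2) - l(2^{\alpha j}t + k_1)$ combined with $|k_2-lk_1|>2^{\epsilon j}$ forces, for every $t\in[-\rho,\rho]$, either $|l\,2^{\alpha j}t + k_2|>2^{\epsilon j}/2$ or $|2^{\alpha j}t+k_1| > 2^{\epsilon j}/(2|l|)\geq 2^{\epsilon j/2}/2$. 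Splitting $\langle\cdot\rangle^{-2N}=\langle\cdot\rangle^{-N}\langle\cdot\rangle^{-N}$, one factor yields a uniform smallness $\lesssim 2^{-N\epsilon j/2}$, while the remaining $\langle\cdot\rangle^{-N}$ factors still permit the sums $\sum_{k_1,k_2\in\mathbb{Z}}$ and the $t$-integration to be absorbed into an $O(1)$ bound. Summing over $|l|\leq 2^{\epsilon j/2}$ multiplies by at most $2^{\epsilon j/2+1}$, so case (c) contributes $\lesssim C_N\, 2^{(2+\alpha)j/2 + \epsilon j/2 - N\epsilon j/2}$, which is $o(2^{-Mj})$ for any $M$ once $N$ is chosen sufficiently large.

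The main obstacle is precisely the bookkeeping in case (c): after splitting the pointwise decay estimate one must preserve enough decay in $k$ to absorb the full lattice sum and the rescaled $t$-integration of effective length $\sim 2^{\alpha j}$. The threshold $|l|=2^{\epsilon j/2}$ separating cases (b) and (c) is chosen so that the frequency-side and spatial-side arguments meet compatibly, and the freedom to take $N$ arbitrarily large compensates for the growth prefactor $2^{(2+\alpha)j/2}$ coming from the shearlet $L^2$-normalization.
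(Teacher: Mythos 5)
Your proof is correct and follows essentially the same two-pronged strategy as the paper's: a frequency-side argument exploiting the rapid decay of $\hat w(\xi_1)$ for shearlets whose frequency support is bounded away from $\{\xi_1=0\}$ (the horizontal cone and large shears), and a spatial-decay argument driven by the exclusion $|k_2-lk_1|>2^{\epsilon j}$ for the remaining nearly-vertical shearlets. The differences are cosmetic: the paper splits at $|l|>1$ rather than $|l|>2^{\epsilon j/2}$ and integrates its spatial estimate over all of $\mathbb{R}^2$ against the mollified line distribution rather than restricting to the line, while your dichotomy $|k_2-lk_1|\le |l\,2^{\alpha j}t+k_2|+|l|\,|2^{\alpha j}t+k_1|$ tracks the joint constraint on $(l,k_1,k_2)$ somewhat more carefully than the paper's bookkeeping does.
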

\begin{proof}
By definition, we have
\begin{eqnarray}
\delta_{2,j} &=& \sum_{(j^{\prime},l,k, \iota) \notin \Lambda_{2,j}} |\langle w\mathcal{L}_{j},\check{\chi}_\iota* \phi^{\alpha,\rm{\iota}}_{j^{\prime},l,k} \rangle | \nonumber \\
&=& \sum_{\substack{\{ (j^{\prime},l,k, \rm{h}) \in \Delta \}  \cup \\   \{(j^{\prime},l,k, \rm{v}) \in \Delta, |l|>1 \} }}
|\langle \widehat{w\mathcal{L}}_{j}, \chi_{\iota} \hat{\phi}_{j^{\prime},l,k}^{\alpha, \rm{\iota}}\rangle | +  \sum_{\substack{(j^{\prime},l,k, \rm{v}) \in \Delta,  \\ |l| \leq 1, |k_2-lk_1|>2^{\epsilon j} }} |\langle w\mathcal{L}_{j},\check{\chi}_{\rm{v}}* \phi^{\alpha,\rm{v}}_{j^{\prime},l,k} \rangle | \nonumber \\
&=&I_1+I_2, \label{100617}
\end{eqnarray}
where $$ I_1= \sum_{\substack{\{ (j^{\prime},l,k, \rm{h}) \in \Delta \}  \cup \\   \{(j^{\prime},l,k, \rm{v}) \in \Delta, |l|>1 \} }}
|\langle \widehat{w\mathcal{L}}_{j}, \chi_{\iota} \hat{\phi}_{j^{\prime},l,k}^{\alpha, \rm{\iota}}\rangle |, I_2=\sum_{\substack{(j^{\prime},l,k, \rm{v}) \in \Delta,  \\ |l| \leq 1, |k_2-lk_1|>2^{\epsilon j} }} |\langle w\mathcal{L}_{j},\check{\chi}_{\rm{v}}* \phi^{\alpha,\rm{v}}_{j^{\prime},l,k} \rangle | .$$
Since $W_j(\xi) W_{j^\prime}(\xi)=0$ for $|j^\prime -j|>1$ due to the different support we only consider  $j^{\prime}=j$ without loss of generality. Before starting with $I_1$, we put
\begin{equation} \label{965}
    t^{\rm{v}} = (t_1^{\rm{v}}, t_2^{\rm{v}}): = A^{-j}_{\alpha, \rm{v}} S^{-l}_{\rm{v}} k= (2^{-\alpha j}k_1, 2^{-2 j} (k_2-lk_1)), 
\end{equation} 
\begin{equation}  \label{966}
  t^{\rm{h}} = (t_1^{\rm{h}}, t_2^{\rm{h}}): = A^{-j}_{\alpha, \rm{h}} S^{-l}_{\rm{h}} k= (2^{-2j}(k_1-lk_2), 2^{-\alpha j} k_2).  
\end{equation}

We have
\begin{eqnarray*}
\Big | \langle w\mathcal{L}_j, \check{\chi}_{\iota}*\phi_{j,l,k}^{\alpha, \rm{\iota}} \rangle \Big | & =& \Big | \langle \widehat{w\mathcal{L}}_j, \chi_\iota \hat{\phi}_{j,l,k}^{\alpha, \rm{\iota}} \rangle \Big | = \Big | \int_{\mathbb{R}^2} \hat{w}(\xi_1) \chi_\iota(\xi) W_j(\xi) \overline{\hat{\phi}^{\alpha, \rm{\iota}}_{j,l,k}(\xi)} \Big |\\
& = & \Big | \int_\mathbb{R}e^{2\pi i t_2^{\rm{\iota}}\xi_2} \int_\mathbb{R}\hat{w}(\xi_1)\chi_\iota(\xi)W_j(\xi)\hat{\phi}^{\alpha, \rm{\iota}}_{j,l,0}(\xi)e^{2\pi it_1^{\rm{\iota}}\xi_1}d\xi_1 d\xi_2 \Big | \\
&=& \Big |  \int_\mathbb{R}e^{2\pi i t_2^{\rm{\iota}}\xi_2} \int_\mathbb{R} \hat{w}(\xi_1) \Theta^{\iota}_{j,l}(\xi)e^{2\pi it_1^{\rm{\iota}}\xi_1}d\xi_1 d\xi_2 \Big |,
\end{eqnarray*}
where $\Theta^{\iota}_{j,l}(\xi):=  \chi_\iota(\xi) W_j(\xi)\hat{\phi}^{\alpha, \rm{\iota}}_{j,l,0}(\xi) $.

Applying repeated integration by parts twice with respect to $\xi_i, i=1,2$, we have
\begin{eqnarray}
  \Big | \langle w\mathcal{L}_j, \phi_{j,l,k}^{\alpha, \rm{\iota}} \rangle \Big |  &=& \Big |  \int_\mathbb{R}e^{2\pi i t_2^{\rm{\iota}}\xi_2} \Big [ \int_\mathbb{R} \frac{\partial^2}{\partial \xi_1^2} [\hat{w}(\xi_1) \Theta^{\iota}_{j,l}(\xi)]\frac{1}{(2\pi i t_1^{\rm{\iota}})^2}e^{2\pi it_1^{\rm{\iota}}\xi_1}d\xi_1 \Big ] d\xi_2 \Big |  \nonumber \\
  & \leq & \frac{C}{|t_1^{\rm{\iota}}|^2} \Big | \int_\mathbb{R}  e^{2\pi i t_2^{\rm{\iota}}\xi_2} \int_\mathbb{R} \frac{\partial^2}{\partial \xi_1^2} [\hat{w}(\xi_1) \Theta^{\iota}_{j,l}(\xi)]e^{2\pi it_1^{\rm{\iota}}\xi_1} d\xi_1 d\xi_2 \Big | \nonumber \\ 
 & \leq & C|t_1^{\rm{\iota}}|^{-2} \int_\mathbb{R} \int_\mathbb{R} \Big | \frac{\partial^2}{\partial \xi_1^2} [\hat{w}(\xi_1) \Theta^{\iota}_{j,l}(\xi)] \Big | d\xi_1 d \xi_2.  \label{9670} 
\end{eqnarray}
Similarly, we obtain
\begin{eqnarray}
 \Big | \langle w\mathcal{L}_j, \phi_{j,l,k}^{\alpha, \rm{\iota}} \rangle \Big | & \leq &  C|t_2^{\rm{\iota}}|^{-2} \int_\mathbb{R} \int_\mathbb{R} \Big | \frac{\partial^2}{\partial \xi_2^2} [\hat{w}(\xi_1) \Theta^{\iota}_{j,l}(\xi)] \Big | d\xi_1 d \xi_2   \label{9671}
 \end{eqnarray}
\begin{eqnarray}
 \Big | \langle w\mathcal{L}_j, \phi_{j,l,k}^{\alpha, \rm{\iota}} \rangle \Big | & \leq & C |t_1^{\rm{\iota}}|^{-2} |t_2^{\rm{\iota}}|^{-2} \int_\mathbb{R} \int_\mathbb{R} \Big | \frac{\partial^{4}}{\partial \xi_1^2\partial \xi_2^2} [\hat{w}(\xi_1) \Theta^{\iota}_{j,l}(\xi)] \Big | d\xi_1 d \xi_2.  \label{9672}
\end{eqnarray}
Therefore, we have
$$
 |t_1^{\rm{\iota}}|^2  \Big | \langle w\mathcal{L}_j, \phi_{j,l,k}^{\alpha, \rm{\iota}} \rangle \Big | \quad \stackrel{\mathclap{\normalfont{\eqref{9670}}}}{ \leq } \quad  C \int_\mathbb{R} \int_\mathbb{R} \Big | \frac{\partial^2}{\partial \xi_1^2} [\hat{w}(\xi_1) \Theta^{\iota}_{j,l}(\xi)] \Big | d\xi_1 d \xi_2  $$
 
 $$ |t_2^{\rm{\iota}}|^2  \Big | \langle w\mathcal{L}_j, \phi_{j,l,k}^{\alpha, \rm{\iota}} \rangle \Big |  \quad \stackrel{\mathclap{\normalfont{\eqref{9671}}}}{ \leq } \quad  C \int_\mathbb{R} \int_\mathbb{R} \Big | \frac{\partial^2}{\partial \xi_2^2} [\hat{w}(\xi_1) \Theta^{\iota}_{j,l}(\xi)] \Big | d\xi_1 d \xi_2  $$
 
  $$|t_1^{\rm{\iota}}|^2 |t_2^{\rm{\iota}}|^2  \Big | \langle w\mathcal{L}_j, \phi_{j,l,k}^{\alpha, \rm{\iota}} \rangle \Big | \quad \stackrel{\mathclap{\normalfont{\eqref{9672}}}}{ \leq }  \quad  C\int_\mathbb{R} \int_\mathbb{R} \Big | \frac{\partial^{4}}{\partial \xi_1^2\partial \xi_2^2} [\hat{w}(\xi_1) \Theta^{\iota}_{j,l}(\xi)] \Big | d\xi_1 d \xi_2. $$
  These imply
  $$ (1+ |t_1^{\rm{\iota}}|^2 + |t_2^{\rm{\iota}}|^2+ |t_1^{\rm{\iota}}|^2 |t_2^{\rm{\iota}}|^2)  \Big | \langle w\mathcal{L}_j, \phi_{j,l,k}^{\alpha, \rm{\iota}} \rangle \Big | =   \langle  |t_1^{\rm{\iota}}| \rangle^2  \langle | t_2^{\rm{\iota}}| \rangle^2 \Big | \langle w\mathcal{L}_j, \phi_{j,l,k}^{\alpha, \rm{\iota}} \rangle \Big |  $$
 \begin{eqnarray}
    &\leq&  C\int_{\mathbb{R}^2} \Big |  \hat{w}(\xi_1) \Theta^{\iota}_{j,l}(\xi) \Big |+ \Big | \frac{\partial^2}{\partial \xi_1^2} [\hat{w}(\xi_1) \Theta^{\iota}_{j,l}(\xi)] \Big |+\Big | \frac{\partial^2}{\partial \xi_2^2} [\hat{w}(\xi_1) \Theta^{\iota}_{j,l}(\xi)] \Big | \qquad  \qquad \qquad \nonumber \\ && + \Big | \frac{\partial^{4}}{\partial \xi_1^2\partial \xi_2^2} [\hat{w}(\xi_1) \Theta^{\iota}_{j,l}(\xi)] \Big | d\xi =C\int_{\mathbb{R}} \Gamma(\xi_2) d\xi_2, \nonumber
 \end{eqnarray}
where  
$\Gamma(\xi_2):= 
\int_{\mathbb{R}} \Big |  \hat{w}(\xi_1) \Theta^{\iota}_{j,l}(\xi) \Big |+ \Big | \frac{\partial^2}{\partial \xi_1^2} [\hat{w}(\xi_1) \Theta^{\iota}_{j,l}(\xi)] \Big |+\Big | \frac{\partial^2}{\partial \xi_2^2} [\hat{w}(\xi_1) \Theta^{\iota}_{j,l}(\xi)] \Big |+ \Big | \frac{\partial^{4}}{\partial \xi_1^2\partial \xi_2^2} [\hat{w}(\xi_1) \Theta^{\iota}_{j,l}(\xi)] \Big | d\xi_1.$

Now we investigate the term $\Gamma$ further.
By the definition, we have $\hat{\psi}_{j,l,k}^{\alpha, \rm{\iota}} $ has compact support in the trapezoidal regions by \eqref{96100}, \eqref{96101}. This implies that for any $\xi \in \supp \Theta^{\iota}_{j,l},$ with $\iota =\rm{v}, $ or $\iota=\rm{h}, |l|>1,$ there exist constants $C_1, C_2 >0$ such that
$$ \xi_1 \in I_{j,l}:= [-C_2l2^{\alpha j },-C_1l2^{\alpha j }] \cup  [C_2l2^{\alpha j },C_1l2^{\alpha }].$$
Thus, we obtain the decay estimate of $\hat{w}^{(n)}(\xi_1), \xi_1 \in I_{j,l}, n=0,1,2,$ by
\begin{equation}
    |\hat{w}^{(n)}(\xi_1)| \leq C_N \langle | 2^{\alpha j}| \rangle^{-N}, \; \forall N \in \mathbb{N}_0. \label{10061}
\end{equation}
For an illustration, we refer to Fig. \ref{2108-1}
\begin{figure}[H] 
  \centering
  \includegraphics[width=0.50\textwidth]{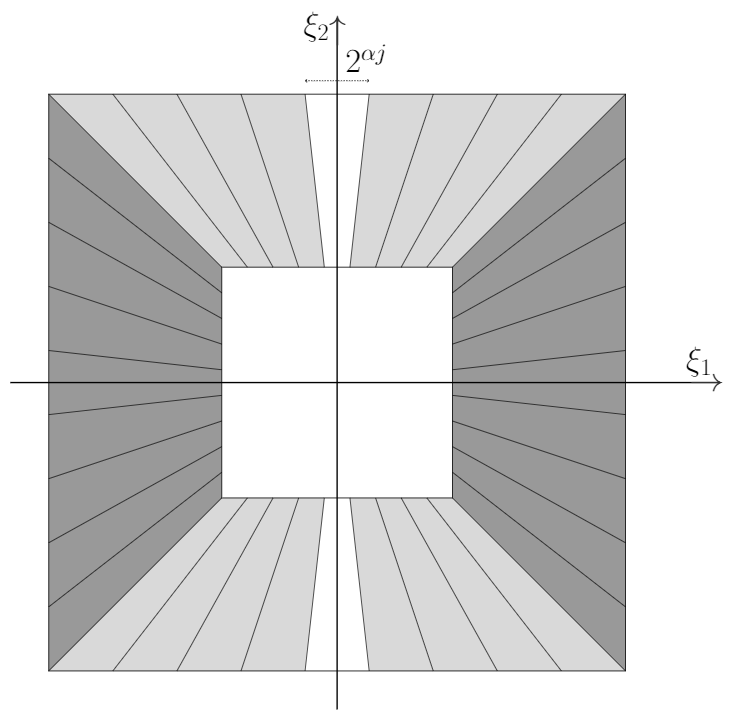}

  \caption{\label{2108-1} Frequency support of  shearlets with $\iota = \rm{v}, |l|>1$ (gray) and  shearlets with $\iota=\rm{h} $ (black).}
\end{figure}

By a direct computation, we also have ${\Theta^{\iota}_{j,l}}^{(n)} \leq C_n, n=0,1,2.$ Combining this with \eqref{10061}, we obtain
$\int_{\mathbb{R}} \Gamma(\xi_2) d\xi_2 \leq C_N \langle | 2^{\alpha j}| \rangle^{-N}, \; \forall N \in \mathbb{N}_0.$ Thus, 
$$\Big | \langle w\mathcal{L}_j, \phi_{j,l,k}^{\alpha, \rm{\iota}} \rangle \Big | \leq  C_N \langle | 2^{\alpha j}| \rangle^{-N}  \langle  |t_1^{\rm{\iota}}| \rangle^2  \langle | t_2^{\rm{\iota}}| \rangle^2 .$$ This implies
\begin{eqnarray}
   I_1 &\leq & C_N \langle | 2^{\alpha j}| \rangle^{-N} \sum_{\substack{(j,l,k, \rm{h}) \cup  \\  \{\iota =\rm{v}, |l|>1 \} }}  \langle  |t_1^{\rm{\iota}}| \rangle^{-2}  \langle | t_2^{\rm{\iota}}| \rangle^{-2}. \label{10063}
\end{eqnarray}
In addition, we have
\begin{eqnarray}
   \sum_{k_1, k_2 \in \mathbb{Z}}  \langle  |t_1^{\rm{h}}| \rangle^{-2}  \langle | t_2^{\rm{h}}| \rangle^{-2} &=&  \sum_{k_1, k_2 \in \mathbb{Z}}  \langle  |2^{-2j}(k_1-lk_2)| \rangle^{-2}  \langle | 2^{-\alpha j} k_2| \rangle^2\nonumber \\
   &=&  \sum_{k_1, k_2 \in \mathbb{Z}}  \langle  |2^{-2j}k_1| \rangle^{-2}  \langle | 2^{-\alpha j} k_2| \rangle^{-2}\nonumber \\
   &\leq &  \int_{\mathbb{R}} \langle  |2^{-2j}t_1| \rangle^{-2}  dt_1  \int_{\mathbb{R}}\langle | 2^{-\alpha j} t_2| \rangle^{-2} dt_2\nonumber \\
    &\leq & C 2^{(2+\alpha)2j}. \label{10064}
    \end{eqnarray}
Similarly, we obtain  
\begin{equation}
    \sum_{k_1, k_2 \in \mathbb{Z}}  \langle  |t_1^{\rm{v}}| \rangle^{-2}  \langle | t_2^{\rm{v}}| \rangle^{-2}  \leq  C 2^{(2+\alpha)2j}. \label{10065}
\end{equation}
In addition, we have 
\begin{equation}
    |l| \leq \lceil 2 \cdot 2^{(2-\alpha )j} \rceil.  \label{10066}
\end{equation}
Combining \eqref{10063}, \eqref{10064}, \eqref{10065}, and \eqref{10066} we obtain
\begin{equation}
    I_1 \leq C_N  \lceil 2 \cdot 2^{(2-\alpha )j} \rceil \langle | 2^{\alpha j}| \rangle^{-N}. \label{10067}
\end{equation}
We now  turn our attention to the term $I_2$. We have
\begin{eqnarray}
| w \mathcal{L}_j (x) | &=& \Big | \int_\mathbb{R} w(y_1) F_j(x-(y_1, 0)) dy_1 \Big | \phantom{111111111111111111111111111}\nonumber \\
& \leq & \Big | \int_\mathbb{R} |w(y_1) | \cdot 2^{4j}| \check{W}( 2^{2j}(x-(y_1, 0))) | dy_1 \Big | \nonumber \\
& \leq &   \int_\mathbb{R} |w(y_1)  | \cdot  C_N 2^{4j}  \langle | 2^{2j}x_2|\rangle^{-N}  \langle | 2^{2j}(y_1 -x_1)|\rangle^{-N}  dy_1  \nonumber \\
&=& C_N2^{4j} \langle | 2^{2j} x_2 | \rangle^{-N} [ |w * \langle 2^{2j}[\cdot]| \rangle^{-N} ](x_1) \nonumber \\
 &=&  C_N2^{4j} \langle | 2^{2j} x_2 | \rangle^{-N}  \tau_{N, j}(x_1),  \label{10069}
\end{eqnarray}
where $\tau_{N, j}(x_1): =   [ |w * \langle 2^{2j}[\cdot]| \rangle^{-N} ](x_1), \quad x=(x_1, x_2) \in \mathbb{R}^2.$ 
Combining \eqref{10069} with Lemma \ref{465}, we obtain
\begin{eqnarray}
I_2 &\leq &  C_N  \sum_{\substack{l \in \{-1,0,1 \},\\  |k_2 -lk_1| > 2^{\epsilon j} } } \int_{\mathbb{R}^2} 2^{4j} \langle | 2^{2j}x_2 |\rangle^{-N} \tau_{N,j}(x_1) 2^{(\alpha +2 )j/2} \langle |2^{\alpha j} x_1 - k_1 | \rangle^{-N} \nonumber \\ 
&& \qquad    \cdot \langle | l 2^{\alpha j }x_1 + 2^{2j} x_2 - k_2 | \rangle^{-N} dx \nonumber \\
& = &  C_N 2^{(6-\alpha ) j/2} \sum_{\substack{l \in \{-1,0,1 \},\\  |k_2 -lk_1| > 2^{\epsilon j} } } \int_{\mathbb{R}^2}  \langle |x_2 |\rangle^{-N} \tau_{N,j}(2^{-\alpha j}(x_1 + k_1)) \langle |x_1| \rangle^{-N} \nonumber \\
&& \qquad \quad \qquad \quad \quad \cdot \langle | lx_1 + lk_1+ x_2 - k_2 | \rangle^{-N} dx \nonumber \\
&  \leq & C_N 2^{(6-\alpha)j/2} \sum_{\substack{l \in \{-1,0,1 \},\\  |k_2 -lk_1| > 2^{\epsilon j} } } \int_{\mathbb{R}} \tau_{N,j}(2^{-\alpha j}(x_1 + k_1)) \langle |x_1| \rangle^{-N} \cdot \nonumber \\
&& \langle | lx_1 + lk_1 -k_2 | \rangle^{-N} dx_1. \nonumber 
\end{eqnarray}
The last equality comes from the fact that  there exists a constant $C_N>0$ satisfying
\begin{equation}
    \int_{\mathbb{R}} \langle |x| \rangle^{-N} \langle | x + a| \rangle^{-N} dx
\leq  C_N \langle |a| \rangle^{-N}, \quad  \forall a \in \mathbb{R}. \label{100615}
\end{equation}
In addition, we have
\begin{eqnarray*}
\sum_{k_1 \in \mathbb{Z}}\tau_{N,j}(2^{-\alpha j}(x_1 + k_1)) & = & \sum_{k_1 \in \mathbb{Z}} \int_{\mathbb{R}}|w(y_1) | \langle | 2^{2j}(y_1-2^{-\alpha j }(x_1+ k_1))| \rangle^{-N}dy_1 \\
& = & \sum_{k_1 \in \mathbb{Z}} \int_{\mathbb{R}} |w(y_1) | \langle | 2^{(2-\alpha)j}(k_1 + x_1 - 2^{\alpha j }y_1)| \rangle^{-N}dy_1 \\
& \stackrel{\mathclap{\normalfont{\alpha \leq 2 }}} \leq & \; \int_{\mathbb{R}}  |w(y_1) | \Big (  \sum_{k_1 \in \mathbb{Z}}\langle |k_1 + x_1 - 2^{\alpha j }y_1| \rangle^{-N} \Big ) dy_1 \\
& \leq &  C\int_{\mathbb{R}}  |w(y_1) | \Big (  \int_\mathbb{R} \langle |t + x_1 - 2^{\alpha j }y_1| \rangle^{-N} dt \Big ) dy_1 \\
& = &  C\int_{\mathbb{R}}  |w(y_1) | \Big (  \int_\mathbb{R} \langle |t| \rangle^{-N} dt \Big ) dy_1  \leq C_N.
\end{eqnarray*}
Therefore, we obtain
\begin{eqnarray}
I_2 & \leq &  C_N 2^{(6-\alpha)j/2} \sum_{\substack{ l \in \{ -1,0,1 \}, k_2 \in \mathbb{Z} \\ |k_2 | > 2^{\epsilon j} }} \int_\mathbb{R} \langle | x_1 | \rangle^{-N} \langle |lx_1 + k_2 | \rangle^{-N} dx_1 \nonumber \\
& \stackrel{\mathclap{\normalfont{\textup{by } \eqref{100615}}}} \leq &   \quad C_N 2^{(-\alpha)j/2} \sum_{k_2 \in \mathbb{Z},  |k_2 | > 2^{\epsilon j} }  \langle | k_2 | \rangle^{-N}   dx_1 \nonumber \\
& \leq &  C_N 2^{(6-\alpha)j/2} \int_{t > 2^{\epsilon j}} \langle | t | \rangle^{-N} \leq  C_N 2^{(6-\alpha)j/2} 2^{-(N-1)\epsilon j}, \quad \forall N \in \mathbb{N}_0. \quad \label{100616}
\end{eqnarray}
Finally, combining
 \eqref{10067}, \eqref{100616} with \eqref{100617} concludes the claim. 

\end{proof}

\subsection{Proof of main theorem}
Now \alex{we are well prepared}\leave{it is ready} to provide a proof for Theorem \ref{1006-18}. 
\begin{proof}
Applying Theorem \ref{1206-5} and Propositions \ref{1606-2}, \ref{1606-3}, \ref{1606-4}, \ref{1606-5}, we obtain
\begin{equation} \label{1908-1}
\| \mathcal{P}_j^\star - \mathcal{P}_j  \|_2  + \| \mathcal{C}_j^\star - w\mathcal{L}_j  \|_2  = o(2^{-Nj}) \rightarrow 0, \quad j \rightarrow \infty. 
\end{equation}
Since \eqref{1908-1} holds for arbitrarily large number $N \in \mathbb{N} $ we \alex{are left}\leave{remain} to estimate the lower bounds of $\|\mathcal{P}_j \|_2$ and $\| w\mathcal{L}_j \|_2$. Without loss of generality, we assume that $\mathcal{P}=\frac{1}{|x|^\lambda}, 0< \lambda <2$. The more general case follows by combining translation invariance with many uses of the triangle inequality.
By Plancherel's theorem and $\widehat{(\frac{1}{|x|^{\lambda}})} = \frac{1}{|\xi|^{2-\lambda}}$, we have

\begin{eqnarray*}
\|\mathcal{P}_j \|_2^2 =\|F_j* \frac{1}{|x|^{\lambda}}\|^2_2 &=&\int_{\mathcal{A}_j} W^2_j(\xi) \frac{1}{|\xi|^{2(2-\lambda)}} d\xi \\
& \gtrsim &  c \cdot 2^{(2-2\lambda)2j}.
\end{eqnarray*}
In addition, 
\begin{eqnarray*}
    \| w \mathcal{L}_j \|^2_2 &=& \int_{\mathcal{A}_j} \hat{w}^2(\xi_1 ) W^2_j(\xi) d \xi \\ 
    &\gtrsim& \int_{\xi_1 \in \mathbb{R}} \hat{w}^2(\xi_1 )  d \xi_1 \int_{2^{2j-5}}^{2^{2j-2}} C d \xi_2\\
    &\gtrsim&  C 2^{2j}.
\end{eqnarray*}
This finishes the proof.
\end{proof}

\section{Conclusions and future directions } \label{2906-1}
\textcolor{black}{ Our main results, Theorem \ref{1206-5} and Theorem \ref{1006-18}, 
show that the ground truth components are \alex{under reasonable assumptions} perfectly recovered at fine scales by Algorithm 1. For \alex{a} practical multiscale approach, we first apply the proposed algorithm for each filtered subband image at each scale \alex{and then recover} the whole signal \alex{using the}\leave{is then recovered by using} reconstruction formula \eqref{2606-1}.
These theorems \alex{might be extended}\leave{are possible to extend} in several ways. Here we focus on separating pointlike and curvelike structures, but our  approach holds for other types of geometric components as well as any frames providing sparse representation.  We can therefore use wavelets, curvelets, Gabor frames, as well as other sparsifying systems for representing components \alex{to be separated}\leave{which we want to separate}. A similar asymptotic separation result can \alex{be} achieved by using such sparsifying systems \alex{if they}\leave{as well as they} provide sufficient small cluster sparsity and cluster coherence. }

\textcolor{black}{
 In our future work,  we  will  extend  our  theory  to  the  case  of multiple-component separation. \alex{Our results might also be applicable to images that are corrupted by noise.}
 \leave{Also, images often contain noise, noisy images where the noise is assumed to be sufficient small can be applicable in this direction.}  
 In addition, other algorithms such as thresholding methods \cite{28} or $l_1$-minimization with an additional regularization term \cite{20} which is effective for image separation in empirical work can be considered in our future study.}

%\begin{acknowledgements}
%VT.D.  acknowledges support by the VIED-MOET Fellowship through Project 911. 
%\end{acknowledgements}

% Authors must disclose all relationships or interests that 
% could have direct or potential influence or impart bias on 
% the work: 
%
% \section*{Conflict of interest}
%
% The authors declare that they have no conflict of interest.

% BibTeX users please use one of
%\bibliographystyle{spbasic}      % basic style, author-year citations
%\bibliographystyle{spmpsci}      % mathematics and physical sciences
%\bibliographystyle{spphys}       % APS-like style for physics
%\bibliography{}   % name your BibTeX data base

% Non-BibTeX users please use

\end{document}